\newtheorem{thm}{Theorem}%[section]
\newtheorem{cor}[thm]{Corollary}
\newtheorem{lem}[thm]{Lemma}
\newtheorem{prop}[thm]{Proposition}
\newtheorem{rem}[thm]{Remark}
\begin{document}

\title[Forward self-similar solutions to the surface diffusion flow]
{Remarks on graph-like forward self-similar solutions \\ to the surface diffusion flow equations}

\author[Y.~Giga]{Yoshikazu Giga}
\address[Y.~Giga]{Graduate School of Mathematical Sciences, The University of Tokyo, 3-8-1 Komaba, Meguro-ku, Tokyo 153-8914, Japan.}
\email{labgiga@ms.u-tokyo.ac.jp}

\author[S.~Katayama]{Sho Katayama}
\address[S.~Katayama]{Graduate School of Mathematical Sciences, The University of Tokyo, 3-8-1 Komaba, Meguro-ku, Tokyo 153-8914, Japan.}
\email{katayama-sho572@g.ecc.u-tokyo.ac.jp}

%%%%%%%%%%%%%%
%\date{}

%\noindent\textbf{AMS subject classifications 2020.}\quad 35C07, 35Q35, 76T10, 82C26

\keywords{surface diffusion, self-similar solution}

%%%%%%%%%%%%%%
\begin{abstract}
We clarify existence and non-existence of graph-like forward self-similar solutions to the planar surface diffusion equations.
\end{abstract}

\maketitle
\thispagestyle{empty}

%%%%%%%%%%%%%%%%%%%%%%%%%%%%%%%%%%%%%%%%%%%%%%%%%
\section{Introduction} \label{SIN}

% 原稿　2023/9/22､1/9
We consider the surface diffusion equation of the form
\begin{equation} \label{ESD}
	V = - \Delta_{\Gamma_t} H
	\quad\text{on}\quad \Gamma_t
\end{equation}
for an evolving hypersurface $\Gamma_t$ in $\mathbb{R}^d$; 
 here, $V$ denotes the normal velocity of $\Gamma_t$ in the direction of a unit normal vector field $\mathbf{n}$ of $\Gamma_t$ and $H$ denotes the $(d-1)$ times mean curvature of $\Gamma_t$ in the direction of $\mathbf{n}$.
 The operator $\Delta_{\Gamma_t}$ denotes the Laplace--Beltrami operator of $\Gamma_t$, that is,
\[
	\Delta_{\Gamma_t} = \operatorname{div}_{\Gamma_t} \nabla_{\Gamma_t},
\]
where $\operatorname{div}_{\Gamma_t}$ denotes the surface divergence and $\nabla_{\Gamma_t}$ denotes the surface gradient.
 By this notation, $H=-\operatorname{div}_{\Gamma_t}\mathbf{n}$.

We say that a solution $\Gamma_t$ of \eqref{ESD} is (forwardly) \emph{self-similar} if $\Gamma_t$ is of the form
\[
	\Gamma_t = t^{1/4} \Gamma_*, \quad
	t > 0
\]
with some hypersurface $\Gamma_*$ independent of time.
 We say that $\Gamma_*$ is a \emph{profile} surface.
 A direct calculation shows that $\Gamma_*$ satisfies
\begin{equation} \label{EFS}
	\frac{x\cdot\mathbf{n}}{4} = - \Delta_{\Gamma_*} H
	\quad\text{on}\quad \Gamma_*.
\end{equation}
We are interested in existence of a non-trivial self-similar solution.
 We say that $\Gamma_*$ is graph-like if the profile surface $\Gamma_*$ is of the form with some function $\phi$ on $\mathbb{R}^{d-1}$, that is,
\[
	\Gamma_* = \left\{ \left(x', \phi(x')\right) \in \mathbb{R}^d \Bigm|
	x' \in \mathbb{R}^{d-1} \right\}.
\]
In \cite{KL}, among other results, Koch and Lamm proved that for a given Lipschitz function $\Omega(\omega)$ on 
% 原稿　2023/9/22､2/9
a unit sphere $S^{d-2}$ in $\mathbb{R}^{d-1}$, there always exists a unique (smooth) graph-like self-similar solution of the form 
\begin{equation} \label{ES1}
	\lim_{r\to\infty} \sup_{\omega\in S^{d-2}} \left| \phi(r\omega)/r - \Omega (\omega) \right| = 0
\end{equation}
provided that 
\[
	\sup_{\omega\in S^{d-2}} \left(\left| \Omega (\omega) \right|
+ \left| \nabla_{S^{d-2}} \Omega (\omega) \right|\right)
\]
is sufficiently small.
 Actually $\phi$ is an analytic function.
 They constructed such a solution by solving an evolution equation \eqref{ESD} for a graph starting from a homogeneous data $\phi_0(x)=\Omega \left(x/|x|\right)|x|$.
 The idea to construct a self-similar solution to an evolution equation by solving the initial value problem with homogeneous initial data goes back to \cite{GM}, where non-trivial self-similar solutions are constructed to the vorticity equations which are formally equivalent to the Navier-Stokes equations.
% 別紙1
 Du and Yip \cite{DY} proved that such a small self-similar solution constructed in \cite{KL} is stable under small perturbation of initial data by adapting a compactness argument which goes back to \cite{GGS}, where large time behavior of solutions to two-dimensional vorticity equations is discussed.
 Recently, it was proved by \cite{GGK} that under a suitable smallness condition for the initial data, a solution of exponential type surface diffusion equation like $V=\Delta_\Gamma\exp(-H)$ behaves asymptotically close to the self-similar solution of \cite{KL} as time tends to infinity when $d=2$;
 their proof works for higher dimensions as remarked in \cite{GGK}. 

% 別紙2
In a recent paper, P.\ Rybka and G.\ Wheeler \cite{RW} established interesting non-existence results of several special graph-like solutions when $d=2$.
They proved that there is no stationary graph-like solution to \eqref{ESD} other than line.
 Moreover, they showed that all graph-like traveling wave type solution like a soliton must be linear.
 For a forward graph-like self-similar solution, under additional conditions, they proved that the profile function $\phi$ must be linear.
 In \cite[Remark 16]{RW}, they gave an impression that 
% 原稿　2025/5/11､1/9
there exist no graph-like self-similar solutions other than linear one when $d=2$.
 The purpose of this paper is to clarify this apparently contradicting status.

% 別紙2-3
 This paper is organized as follows.
 In Section \ref{SKI}, we derive a key identity for forward self-similar solutions essentially derived by \cite{RW}.
 We here derive its geometric form in general dimensions.
 In Section \ref{SEC}, we give a 
% 別紙3
sufficient condition so that a profile function is linear.
 In Section \ref{SFP}, we recall main results and proofs of \cite{RW} for a graph-like forward self-similar solution.
 We also point out that their interpretation \cite[Remark 16]{RW} of their results is misleading.
 In Section \ref{SCE}, we recall a main result of \cite{KL} concerning the existence of non-trivial (nonlinear) graph-like small forward self-similar solutions.
 As pointed out by \cite[Remark 16]{RW}, the statement would give an impression that their solution (Theorem \ref{TKL}) is self-similar \emph{up to} non-zero time-dependent spatially constant function.
 We clarify this point and it turns out that there is no such ambiguity.

% 原稿　2023/9/22､3/9
%%%%%%%%%%%%%%%%%%%%%%%%%%%%%%%%%%%%%%%%%%%%%%%%%
\section{A key identity for a profile surface} \label{SKI} % Section 2

We begin with several elementary identities. 
\begin{prop} \label{PEI}
Let $\Gamma$ be a given hypersurface in $\mathbb{R}^d$.
 Let $a$ be a $C^1$ function on $\Gamma$.
 Then
\begin{enumerate}
\item[(i)] $\operatorname{div}_\Gamma x = d-1$,
\item[(i\hspace{-0.1em}i)] $\operatorname{div}_\Gamma \left( a(x)\mathbf{n} \right) = -a(x) H$,
\item[(i\hspace{-0.1em}i\hspace{-0.1em}i)] $\nabla_\Gamma \left( |x|^2/2 \right) = x - \mathbf{n}(x \cdot \mathbf{n})$,
\item[(i\hspace{-0.1em}v)] $\Delta_\Gamma \left( |x|^2/2 \right) = d - 1 + H(x \cdot \mathbf{n})$.
\end{enumerate}
\end{prop}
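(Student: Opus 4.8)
The plan is to reduce everything to two standard facts of tangential calculus and then obtain the fourth identity as a formal consequence of the first three. The two facts are: (a) for a scalar function $f$ defined near $\Gamma$, the surface gradient is the tangential projection of the Euclidean gradient, $\nabla_\Gamma f = \nabla f - (\nabla f\cdot\mathbf{n})\mathbf{n}$, so in particular $\mathbf{n}\cdot\nabla_\Gamma f = 0$; and (b) the Leibniz rule $\operatorname{div}_\Gamma(fX) = f\operatorname{div}_\Gamma X + X\cdot\nabla_\Gamma f$ for a scalar $f$ and a vector field $X$. I would recall these at the outset and then dispatch the four items in order.

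For (i), write $x = \sum_{i=1}^d x_i e_i$ and compute componentwise: $\nabla x_i = e_i$ gives $\nabla_\Gamma x_i = e_i - n_i\mathbf{n}$, hence $\operatorname{div}_\Gamma x = \sum_{i=1}^d e_i\cdot\nabla_\Gamma x_i = \sum_{i=1}^d (1 - n_i^2) = d - |\mathbf{n}|^2 = d - 1$; equivalently, $\operatorname{div}_\Gamma x = \operatorname{tr}(I - \mathbf{n}\otimes\mathbf{n}) = d-1$. For (ii), the Leibniz rule (b) gives $\operatorname{div}_\Gamma(a(x)\mathbf{n}) = a(x)\operatorname{div}_\Gamma\mathbf{n} + \mathbf{n}\cdot\nabla_\Gamma a(x)$; the last term vanishes by (a) since $\nabla_\Gamma a(x)$ is tangential, and $\operatorname{div}_\Gamma\mathbf{n} = -H$ by the definition of $H$, so the right-hand side equals $-a(x)H$. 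For (iii), since $\nabla(|x|^2/2) = x$, fact (a) immediately yields $\nabla_\Gamma(|x|^2/2) = x - (x\cdot\mathbf{n})\mathbf{n}$.

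Finally, (iv) follows by combining the previous three: using (iii) and then splitting the surface divergence,
\[
\Delta_\Gamma\!\left(|x|^2/2\right) = \operatorname{div}_\Gamma\nabla_\Gamma\!\left(|x|^2/2\right) = \operatorname{div}_\Gamma x - \operatorname{div}_\Gamma\!\left((x\cdot\mathbf{n})\,\mathbf{n}\right) = (d-1) + H(x\cdot\mathbf{n}),
\]
where the first term is evaluated by (i) and the second by (ii) applied with the function $a(x) = x\cdot\mathbf{n}$. The only point deserving a word of care is that $a(x) = x\cdot\mathbf{n}$ must be an admissible ($C^1$) function on $\Gamma$ for (ii) to apply in this last step, which is why one tacitly assumes $\Gamma$ is at least $C^2$; beyond that there is no genuine obstacle, every step being a one-line tangential computation. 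If one prefers to avoid invoking extensions of functions off $\Gamma$, each identity can instead be verified in a local parametrization, but the projection/Leibniz route is the shortest.
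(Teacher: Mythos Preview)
Your proof is correct and follows essentially the same approach as the paper: both compute (i) via the trace of the tangential projection, prove (ii) by the Leibniz rule plus orthogonality of $\nabla_\Gamma a$ to $\mathbf{n}$, obtain (iii) by tangentially projecting $\nabla(|x|^2/2)=x$ (the paper does this componentwise, you state it via the projection formula), and derive (iv) by combining (i)--(iii). Your added remark that $a(x)=x\cdot\mathbf{n}$ needs $\Gamma$ to be $C^2$ for (ii) to apply in the last step is a nice point of care not made explicit in the paper.
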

\begin{proof}
We write $\mathbf{n}=(n_1,\ldots,n_d)$, $x=(x_1,\ldots,x_d)$.
\begin{enumerate}
\item[(i)] By definition (see e.g.\ \cite{G}),
\begin{align*}
	\operatorname{div}_\Gamma x &= \operatorname{trace} (I-\mathbf{n}\otimes\mathbf{n}) Dx
	= \sum_{1\le i,j\le d} (\partial_i - n_i n_j \partial_j) x_i \\
	&= \operatorname{div}x - \sum_{1\le i,j\le d} n_i n_j \delta_{ij}
	= d - |\mathbf{n}|^2 = d-1,
\end{align*}
where $\partial_i=\partial/\partial x_i$.
\item[(i\hspace{-0.1em}i)] By Leibniz's rule,
\[
	\operatorname{div}_\Gamma(a\mathbf{n})
	= a \operatorname{div}_\Gamma \mathbf{n}
	+ (\nabla_\Gamma a) \cdot \mathbf{n}.
\]
Since $\nabla_\Gamma a$ is a tangent vector, the second term vanishes so (i\hspace{-0.1em}i) follows.
\item[(i\hspace{-0.1em}i\hspace{-0.1em}i)] By definition,
\begin{align*}
	\nabla_\Gamma \left(|x|^2/2\right)
	&= \left( \left(\partial_i - \sum_{j=1}^d n_i n_j \partial_j \right) \left(|x|^2/2\right) \right)_{i=1}^d \\
	& = \left( x_i - \sum_{j=1}^d n_i n_j x_j \right)_{i=1}^d = x-\mathbf{n}(x\cdot\mathbf{n}).
\end{align*}
\item[(i\hspace{-0.1em}v)] This follows from (i), (i\hspace{-0.1em}i), (i\hspace{-0.1em}i\hspace{-0.1em}i).
 Indeed,
\begin{align*}
	\Delta_\Gamma \left( |x|^2/2 \right) 
	&= \operatorname{div}_\Gamma \left( \nabla_\Gamma \left(|x|^2/2 \right)\right) \\
	&= \operatorname{div}_\Gamma \left( x - \mathbf{n}(x\cdot\mathbf{n}) \right)  \quad\text{by (i\hspace{-0.1em}i\hspace{-0.1em}i)}\\
	&= d-1 + H(x\cdot\mathbf{n}) \quad\text{by (i) and (i\hspace{-0.1em}i)}.
\end{align*}
\end{enumerate}
\end{proof}

 We next derive a key identity for self-similar solutions.
 As remarked in Remark \ref{RRW}, the same identity is derived by \cite{RW} where $\Gamma_*$ is a graph-like curve.  
% 原稿　2023/9/22､4/9
\begin{lem} \label{LSI}
If $\Gamma_*$ is a profile surface of a (forward) self-similar solution to \eqref{ESD}, then
\[
	\Delta_{\Gamma_*} \left( H^2 + \frac{|x|^2}{4} \right)
	= \frac{d-1}{2} + 2|\nabla_{\Gamma_*} H|^2 \quad\text{on}\quad \Gamma_*.
\]
\end{lem}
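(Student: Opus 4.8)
The plan is to combine the self-similar equation \eqref{EFS} with the elementary identities of Proposition \ref{PEI}. The key observation is that \eqref{EFS} reads $-\Delta_{\Gamma_*} H = (x\cdot\mathbf{n})/4$, so I want to relate $\Delta_{\Gamma_*}(H^2)$ to $\Delta_{\Gamma_*} H$, and $\Delta_{\Gamma_*}(|x|^2/4)$ to $x\cdot\mathbf{n}$ via Proposition \ref{PEI}(iv).

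First I would apply the Leibniz-type rule for the Laplace--Beltrami operator acting on a square: for any $C^2$ function $u$ on $\Gamma_*$,
\[
	\Delta_{\Gamma_*}(u^2) = 2u\,\Delta_{\Gamma_*} u + 2|\nabla_{\Gamma_*} u|^2.
\]
Taking $u = H$ gives $\Delta_{\Gamma_*}(H^2) = 2H\,\Delta_{\Gamma_*} H + 2|\nabla_{\Gamma_*} H|^2$. Next, by Proposition \ref{PEI}(iv),
\[
	\Delta_{\Gamma_*}\!\left(\frac{|x|^2}{4}\right) = \frac{1}{2}\,\Delta_{\Gamma_*}\!\left(\frac{|x|^2}{2}\right) = \frac{d-1}{2} + \frac{1}{2}H(x\cdot\mathbf{n}).
\]
Adding these two, the term involving $H(x\cdot\mathbf{n})$ appears as $2H\,\Delta_{\Gamma_*}H + \tfrac12 H(x\cdot\mathbf{n})$. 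Now I substitute the self-similar equation: from \eqref{EFS}, $\Delta_{\Gamma_*}H = -(x\cdot\mathbf{n})/4$, so $2H\,\Delta_{\Gamma_*}H = -\tfrac12 H(x\cdot\mathbf{n})$, which exactly cancels the $+\tfrac12 H(x\cdot\mathbf{n})$ coming from Proposition \ref{PEI}(iv). What remains is
\[
	\Delta_{\Gamma_*}\!\left(H^2 + \frac{|x|^2}{4}\right) = \frac{d-1}{2} + 2|\nabla_{\Gamma_*} H|^2,
\]
which is the claimed identity.

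The computation is essentially mechanical once the right pieces are assembled; the only point requiring a little care is the cancellation of the mixed term $H(x\cdot\mathbf{n})$, which is precisely where the self-similar structure $\Delta_{\Gamma_*}H = -(x\cdot\mathbf{n})/4$ enters and makes the right-hand side free of the curvature--position coupling. I would also note that the Leibniz rule for $\Delta_{\Gamma_*}(u^2)$ follows from the product rule $\nabla_{\Gamma_*}(u^2) = 2u\nabla_{\Gamma_*}u$ together with $\operatorname{div}_{\Gamma_*}(u\,W) = u\operatorname{div}_{\Gamma_*}W + \nabla_{\Gamma_*}u\cdot W$ for a tangent field $W$, so no new machinery beyond Proposition \ref{PEI} is needed. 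There is no serious obstacle here; the lemma is an algebraic consequence of \eqref{EFS} and the geometric identities already established.
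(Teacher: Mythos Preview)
Your argument is correct and follows essentially the same route as the paper: expand $\Delta_{\Gamma_*}(H^2)$ via the product rule, apply Proposition~\ref{PEI}(iv) to $\Delta_{\Gamma_*}(|x|^2/4)$, and then use \eqref{EFS} to cancel the mixed term $H(x\cdot\mathbf{n})$. The paper's proof is just a slightly more condensed version of exactly this computation.
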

\begin{proof} 
Since
\[
	\Delta_{\Gamma_*} H^2 = 2 \operatorname{div}_{\Gamma_*} (H \nabla_{\Gamma_*} H)
	= 2H \Delta_{\Gamma_*} H + 2|\nabla_{\Gamma_*} H|^2,
\]
it follows from Proposition \ref{PEI} that
\[
	\Delta_{\Gamma_*} \left( H^2 + \frac{|x|^2}{4} \right)
	= \frac{d-1}{2} + 2|\nabla_{\Gamma_*} H|^2 + H \left( \frac{x \cdot \mathbf{n}}{2} + 2\Delta_{\Gamma_*} H \right).
\]
The desired identity follows since $\Gamma_*$ satisfies \eqref{EFS}.
\end{proof}
\begin{rem} \label{RB}
If we consider a backward self-similar solution, $\Gamma_*$ must satisfy
\[
	\frac{x \cdot \mathbf{n}}{4} = \Delta_{\Gamma_*} H_* \quad\text{on}\quad \Gamma_*.
\]
In this case,
\begin{align*}
	\Delta_{\Gamma_*} \left( H^2 - \frac{|x|^2}{4} \right) 
	&= - \frac{d-1}{2} + 2 |\nabla_{\Gamma_*} H|^2 + H \left( -\frac{x\cdot\mathbf{n}}{2} + 2 \Delta_{\Gamma_*} H \right) \\
	&= - \frac{d-1}{2} + 2 |\nabla_{\Gamma_*} H|^2.
\end{align*}
\end{rem}
\begin{rem} \label{RRW}
In the case $d=2$, $\nabla_\Gamma$ and $\operatorname{div}_\Gamma$ are just differentiations with respect to the arc-length parameter which is denoted by $\partial_s$.
 The identity in Lemma \ref{LSI} is now
\begin{equation} \label{E2D}
	\partial_s^2 \left( k^2 {+} \frac{|x|^2}{4} \right) 
	= \frac12 + 2(\partial_s k)^2,
\end{equation}
where $k$ denotes the curvature.
 If $\Gamma_*$ is given as a graph, i.e., $x_2=\phi(x_1)$, the upward curvature $k$, $\partial_s$ and $|x|^2$ are of the form
\[
	k[\phi] = \frac{\partial_{x_1}^2 \phi}{v^3}, \quad 
	v[\phi] = \sqrt{1 + (\partial_{x_1} \phi)^2}, \quad
	\partial_s = \frac1v \partial_{x_1}, \quad
	|x|^2 = x_1^2 + \phi (x_1)^2.
\]
% 原稿　2023/9/22､5/9
By using this notation, \cite{RW} derived \eqref{E2D}.
\end{rem}

As an application of Lemma \ref{LSI}, we prove the non-existence of a compact profile function.
\begin{cor} \label{CNE}
There is no smooth compact profile surface of forward self-similar solution to \eqref{ESD}.
\end{cor}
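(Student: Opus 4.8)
The plan is to combine the key identity of Lemma~\ref{LSI} with the compactness of $\Gamma_*$ via the maximum principle. Suppose, for contradiction, that $\Gamma_*$ is a smooth compact profile surface of a forward self-similar solution to \eqref{ESD}; being a compact hypersurface in $\mathbb{R}^d$, it is closed (no boundary). Set
\[
	f := H^2 + \frac{|x|^2}{4},
\]
which is a smooth function on $\Gamma_*$. By compactness $f$ attains its maximum at some point $p \in \Gamma_*$, and at such a point the Laplace--Beltrami operator satisfies $\Delta_{\Gamma_*} f(p) \le 0$.

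On the other hand, Lemma~\ref{LSI} asserts
\[
	\Delta_{\Gamma_*} f = \frac{d-1}{2} + 2|\nabla_{\Gamma_*} H|^2
	\quad\text{on}\quad \Gamma_*,
\]
so, since $d \ge 2$, evaluating at $p$ gives $\Delta_{\Gamma_*} f(p) \ge \tfrac{d-1}{2} > 0$, which contradicts $\Delta_{\Gamma_*} f(p) \le 0$. Hence no smooth compact profile surface exists.

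Equivalently, one may integrate the identity of Lemma~\ref{LSI} over the closed manifold $\Gamma_*$: the integral of $\Delta_{\Gamma_*} f$ vanishes by the divergence theorem, whereas the right-hand side integrates to a strictly positive quantity because $\frac{d-1}{2} > 0$. In either form the argument is immediate; the only real content is Lemma~\ref{LSI} itself, and the sole (minor) point to keep in mind is that ``compact profile surface'' should be read as a closed hypersurface, so that neither the maximum principle nor the boundaryless divergence theorem incurs boundary terms. I do not expect any substantive obstacle here.
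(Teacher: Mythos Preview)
Your proof is correct and follows essentially the same maximum-principle argument as the paper: the function $H^2+|x|^2/4$ attains a maximum on the compact $\Gamma_*$, whereupon $\Delta_{\Gamma_*}$ is nonpositive there, contradicting Lemma~\ref{LSI}. The integration alternative you mention is a valid extra route, but the paper uses only the maximum-point argument.
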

\begin{proof}
If $\Gamma_*$ is compact, $H^2+|x|^2/4$ takes its maximum on $\Gamma_*$.
 At a maximum point, 
\[
	\Delta_{\Gamma_*}\left(H^2+|x|^2/4\right) \le 0.
\]
This contradicts the identity of Lemma \ref{LSI}.
\end{proof}

%%%%%%%%%%%%%%%%%%%%%%%%%%%%%%%%%%%%%%%%%%%%%%%%%
\section{Estimate of $|x|^2$ and its consequence} \label{SEC} % Section 3

{From now on we concentrate on graph-like solutions for $d=2$.}
 We set the arc-length parameter
\[
	s(x_1) = \int_0^{x_1} \sqrt{1 + (\partial_{x_1}\phi)^2 (z)}\,dz
	= \int_0^{x_1} v [\phi] (z)\,dz.
\]
Note that $s<0$ if $x_1<0$.
 We begin with an estimate from elementary geometry.
 See Figure \ref{Fx}.
\begin{figure}[htb]
\centering
\includegraphics[width=6.5cm]{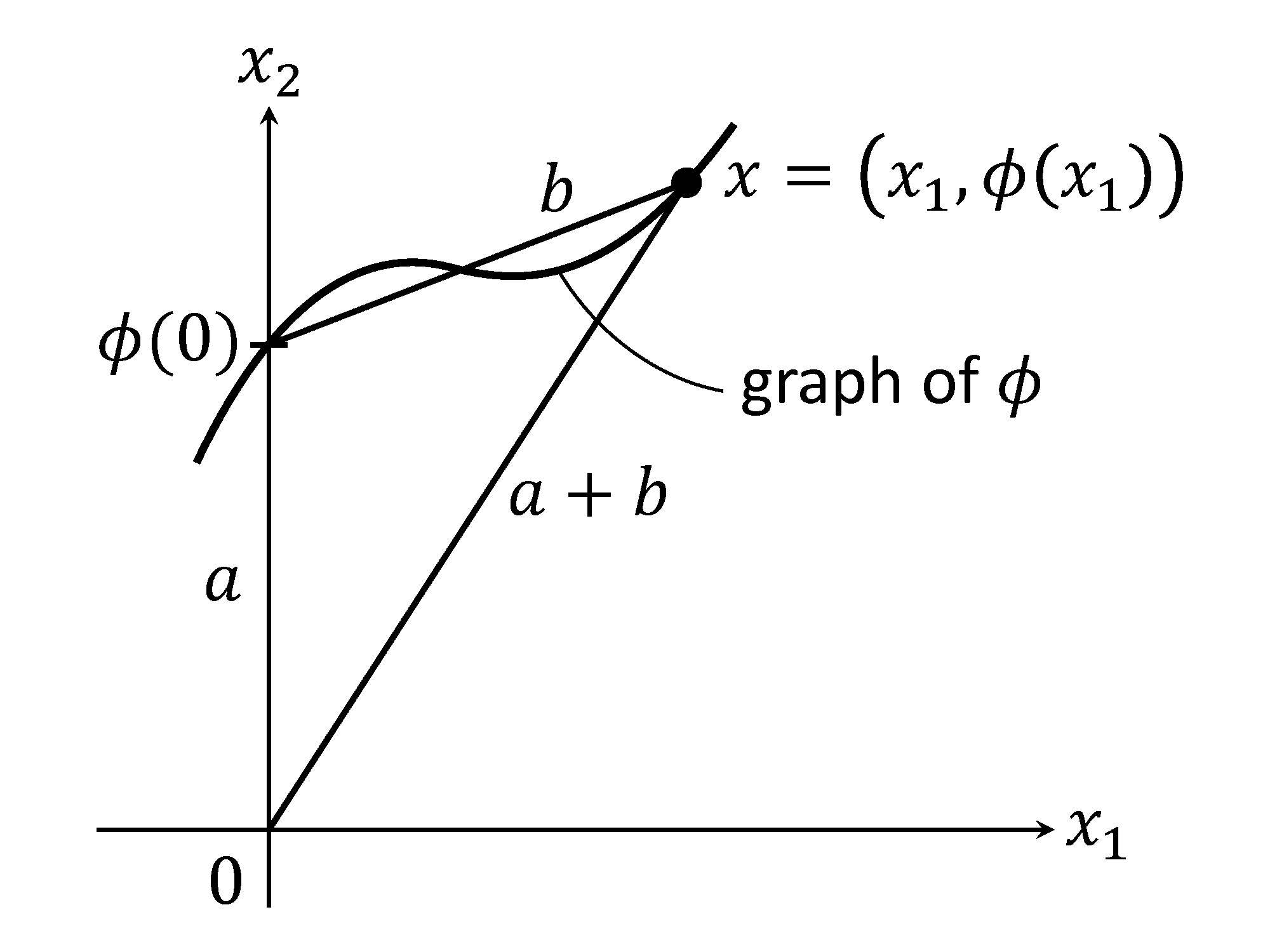}
\caption{location of $x$} \label{Fx}
\end{figure}
\begin{prop} \label{PEG}
For $x=\left(x_1,\phi(x_1)\right)$, $|x|^2$ is estimated as 
\[
	|x|^2 \le s^2 + \phi(0)^2 + 2\phi(0) \left(\phi(x_1)-\phi(0)\right)
	= s^2 - \phi(0)^2 + 2\phi(0) \phi(x_1), \quad
	{x_1 \in \mathbb{R}.}
\]
\end{prop}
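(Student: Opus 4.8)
The estimate is purely geometric: I want to bound $|x|^2 = x_1^2 + \phi(x_1)^2$ from above using the arc length $s = s(x_1)$ together with the ``vertical'' data $\phi(0)$ and $\phi(x_1)$. The natural picture (Figure \ref{Fx}) is that the point $x = (x_1, \phi(x_1))$ on the curve is connected to the origin, and the straight segment from $(0,\phi(0))$ to $(x_1,\phi(x_1))$ has length at most the arc length $s$ of the graph joining those two parameter values (since a straight line minimizes length between its endpoints, and $s = \int_0^{x_1} v[\phi]\,dz$ is exactly that arc length when $x_1 \ge 0$, and $|s|$ when $x_1<0$). So the plan is: first write the squared distance from the origin to $x$ via the intermediate point $(0,\phi(0))$, then recognize that one leg of the resulting triangle has length $\le |s|$.

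Concretely, I would start from the identity
\[
	|x|^2 = x_1^2 + \phi(x_1)^2
	= x_1^2 + \bigl(\phi(x_1) - \phi(0)\bigr)^2 + 2\phi(0)\bigl(\phi(x_1)-\phi(0)\bigr) + \phi(0)^2 .
\]
The first two terms are precisely the squared Euclidean distance between $(0,\phi(0))$ and $(x_1,\phi(x_1))$, i.e.
\[
	x_1^2 + \bigl(\phi(x_1)-\phi(0)\bigr)^2
	= \left| \int_0^{x_1} \bigl(1, \partial_{x_1}\phi(z)\bigr)\,dz \right|^2
	\le \left( \int_0^{|x_1|} \bigl|(1,\partial_{x_1}\phi(z))\bigr|\,dz \right)^2
	= \left( \int_0^{|x_1|} v[\phi](z)\,dz \right)^2 = s^2 ,
\]
using the triangle inequality for vector-valued integrals (and $|s| = \int_0^{|x_1|} v[\phi]\,dz$ since $v[\phi]\ge 1 > 0$). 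Substituting this bound into the displayed identity gives
\[
	|x|^2 \le s^2 + \phi(0)^2 + 2\phi(0)\bigl(\phi(x_1)-\phi(0)\bigr),
\]
and expanding $2\phi(0)(\phi(x_1)-\phi(0)) = 2\phi(0)\phi(x_1) - 2\phi(0)^2$ turns the right-hand side into $s^2 - \phi(0)^2 + 2\phi(0)\phi(x_1)$, which is the second form of the claimed inequality.

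There is essentially no obstacle here; the only thing to be careful about is the sign of $x_1$, so that the arc-length integral is handled with the correct orientation (hence the appearance of $|x_1|$ and $s^2$ rather than $s$), and the remark already in the text that $s<0$ when $x_1<0$ makes this bookkeeping transparent. The inequality is sharp precisely when the graph is a straight line between $0$ and $x_1$, which is consistent with the linearity conclusions the paper is heading toward.
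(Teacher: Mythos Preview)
Your approach is exactly the paper's: the paper sets $a=(0,\phi(0))$ and $b=(x_1,\phi(x_1)-\phi(0))$, expands $|x|^2=|a|^2+|b|^2+2a\cdot b$, and uses $|b|\le|s|$ (chord $\le$ arc length). Your algebraic identity is this expansion written out in coordinates, and your integral inequality is precisely the bound $|b|^2\le s^2$.

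One bookkeeping slip to fix: for $x_1<0$ you write $|s|=\int_0^{|x_1|}v[\phi](z)\,dz$, but that integrates over $[0,-x_1]$, whereas $|s(x_1)|=\int_{x_1}^{0}v[\phi](z)\,dz$ integrates over $[x_1,0]$; these differ unless $v[\phi]$ is even. The correct inequality is
\[
\left|\int_0^{x_1}(1,\phi'(z))\,dz\right|
\le \int_{\min(0,x_1)}^{\max(0,x_1)}\bigl|(1,\phi'(z))\bigr|\,dz
= |s(x_1)|,
\]
after which squaring gives the bound you want. With this adjustment the argument is complete and matches the paper.
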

\begin{proof}
We set $a=\left(0,\phi(0)\right)$, $b=\left(x_1,\phi(x_1)-\phi(0)\right)$ so that $a+b=x=\left(x_1,\phi(x_1)\right)$.
 Then
\begin{align*}
	|x|^2 = |a+b|^2 &= |a|^2 + |b|^2 + 2a \cdot b \\
	&\le \phi(0)^2 + s^2 + 2\phi(0) \left(\phi(x_1)-\phi(0)\right)
\end{align*}
since $|b|\le|s|$.
\end{proof}

% 原稿　2023/9/22､6/9
Combining this with the identity \eqref{E2D} which is a two-dimensional graph version of that in Lemma \ref{LSI}, we conclude that the profile function is linear provided that $\phi$ satisfies a special growth estimate \ref{ESp} {below.}
% In \cite{RW}, it is claimed this statement without \eqref{ESp} but we are afraid that this is overstated.
\begin{thm} \label{TNon}
Let $\phi$ be a profile function of a graph-like (forward) self-similar solution of \eqref{ESD}.
 If $\phi$ satisfies
\begin{equation} \label{ESp}
	\phi(0) \phi(x_1) \le \alpha s(x_1) + \beta, \quad
	x_1 \in \mathbb{R}
\end{equation}
for some $\alpha,\beta\in\mathbb{R}$, then $\phi(x_1)=cx_1$ for some $c\in\mathbb{R}$.
\end{thm}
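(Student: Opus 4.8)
The plan is to use the key identity \eqref{E2D} together with the growth bound from Proposition \ref{PEG} to show that $k^2 + |x|^2/4$ is, up to affine-in-$s$ corrections controlled by \eqref{ESp}, a subsolution of an ODE in the arc-length variable $s$ that forces $\partial_s k \equiv 0$. First I would record that, by \eqref{E2D}, the function $g(s) := k(s)^2 + |x(s)|^2/4$ satisfies $g'' = \tfrac12 + 2(\partial_s k)^2 \ge \tfrac12$ on all of $\mathbb{R}$ (note $s$ ranges over all of $\mathbb{R}$ since $v[\phi]\ge 1$). Hence $h(s) := g(s) - s^2/4$ is convex. The point of Proposition \ref{PEG} combined with the hypothesis \eqref{ESp} is that
\[
	\frac{|x|^2}{4} \le \frac{s^2}{4} + \frac{-\phi(0)^2 + 2\alpha s + 2\beta}{4},
\]
so that $h(s) = k(s)^2 + \tfrac14(|x|^2 - s^2) \le k(s)^2 + \tfrac12\alpha s + C$ for a constant $C$; in particular, since $k^2\ge 0$, the convex function $h$ is bounded below by $\tfrac12\alpha s + C$, which is already automatic, but more usefully $h(s) - \tfrac12\alpha s \ge -C'$ is NOT what we want — rather we want an upper bound. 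The correct move is to observe that $|x|^2 \ge 0$ gives $g(s) \ge k(s)^2 \ge 0$, while the upper estimate gives an affine majorant for $|x|^2/4 - s^2/4$; the real content is that $h$ is convex \emph{and} $h(s) \le k(s)^2 + (\text{affine in } s)$.

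The key step I would then carry out is the following dichotomy argument on the convex function $h$. Since $h$ is convex on $\mathbb{R}$ with $h'' = 2(\partial_s k)^2 \ge 0$, either $h$ is affine (equivalently $\partial_s k \equiv 0$, which is exactly the conclusion we want, giving $k$ constant, then $k\equiv 0$ by examining the equation or by the sublinearity forced on $\phi$, hence $\phi$ linear), or $h$ is strictly convex somewhere, in which case $h'(s)\to +\infty$ as $s\to+\infty$ or $h'(s)\to -\infty$ as $s\to-\infty$, so $h(s)$ grows superlinearly in $|s|$ on at least one side. But from $h(s) = k^2 + \tfrac14(|x|^2 - s^2)$ and Proposition \ref{PEG}, on that side $h(s) \le k(s)^2 + \tfrac12\alpha s + \tfrac14(2\beta - \phi(0)^2)$, so $k(s)^2$ would have to grow superlinearly. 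I would then need a \emph{contradiction with an upper bound on $k$} coming from the self-similar equation: from \eqref{EFS} in the $d=2$ graph form, $\partial_s^2 k = -\tfrac14 x\cdot\mathbf{n}$, and $|x\cdot\mathbf{n}| \le |x| \le |s| + |\phi(0)|$ (using $|b|\le|s|$ from the proof of Proposition \ref{PEG} and $|x|\le|a|+|b|$), so $|\partial_s^2 k|$ grows at most linearly, hence $k$ grows at most cubically — which by itself is not enough, so I must instead feed back: superlinear growth of $k^2$ means $k$ grows like $|s|^{1/2+\epsilon}$ at least, and then I track $\partial_s(k^2 + |x|^2/4)$ directly.

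Concretely, the cleanest route: integrate \eqref{E2D} once to get $\partial_s(k^2 + |x|^2/4)(s) = \partial_s(k^2+|x|^2/4)(0) + \tfrac{s}{2} + 2\int_0^s (\partial_s k)^2$, so
\[
	\partial_s\!\left(k^2 + \frac{|x|^2}{4}\right)\!(s) - \frac{s}{2}
\]
is nondecreasing. Pairing with $k^2 \le h(s) - \tfrac14(|x|^2-s^2)$-type bounds and the affine majorant for $|x|^2$, a Gronwall/comparison argument on $f(s):=k(s)^2$ should force $\int_{\mathbb{R}}(\partial_s k)^2 < \infty$ and then $\int_0^{\pm\infty}(\partial_s k)^2 = 0$ is forced by matching the linear-in-$s$ terms on both sides as $s\to\pm\infty$. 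Then $\partial_s k \equiv 0$, so $k$ is constant; constancy of $k$ with the bound $|x\cdot\mathbf{n}|\le|x|$ growing sublinearly-after-dividing forces via \eqref{EFS} that $k\equiv 0$, whence $\phi$ is affine, and $\phi(0)$ may be absorbed so $\phi(x_1)=cx_1$. The main obstacle I anticipate is making the two-sided matching of the linear terms rigorous: \eqref{ESp} only gives a \emph{one-sided} (upper) bound on $\phi(0)\phi(x_1)$, so the growth control on $|x|^2$, hence on $h$, is one-sided in the sense of an upper bound but holds for all $x_1$; I must check carefully that convexity of $h$ plus this single upper affine majorant (valid for all $s$) already rules out strict convexity, i.e. that a convex function on $\mathbb{R}$ lying below a line through infinitely many slopes... — precisely, a convex function bounded above by an affine function on all of $\mathbb{R}$ must itself be affine, which is the clean lemma that closes the argument once I verify $k(s)^2$ does not rescue the superlinearity, and that verification is where the estimate $|x|\le |s|+|\phi(0)|$ on $k$ via \eqref{EFS} enters.
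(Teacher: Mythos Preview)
Your setup is essentially the paper's: with $h(s)=k^2+\tfrac14(|x|^2-s^2)$ one has $h''=2(\partial_sk)^2\ge 0$ by \eqref{E2D}, and Proposition~\ref{PEG} together with \eqref{ESp} gives $h(s)-\tfrac12\alpha s-C\le k(s)^2$ for all $s$. The left side is a convex function of $s$; if it is not constant it tends to $+\infty$ as $s\to+\infty$ or $s\to-\infty$ (note: a nonconstant convex function on $\mathbb{R}$ need not grow \emph{superlinearly}, but it does go to $+\infty$ on at least one side, and that is all that is needed). Hence on that side $k(s)^2\to\infty$, so in particular $|k|$ is bounded away from zero for all large $x_1$ (or all large $-x_1$).

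The genuine gap is at this point. You try to extract a contradiction from the ODE bound $|\partial_s^2k|=\tfrac14|x\cdot\mathbf{n}|\le\tfrac14(|s|+|\phi(0)|)$, but as you yourself note this only yields $|k|\lesssim|s|^3$, which does not conflict with $k^2\to\infty$; the subsequent ``Gronwall/comparison'' and ``matching of linear terms'' sketches do not close either, and the final sentence invokes the lemma ``a convex function bounded above by an affine function is affine,'' which you do \emph{not} have (your upper bound is $k^2+\text{affine}$, not affine). The missing ingredient is purely geometric and has nothing to do with \eqref{EFS}: since $\Gamma_*$ is the graph of a function on all of $\mathbb{R}$, the tangent angle $\theta$ stays in $(-\pi/2,\pi/2)$, so $\int k\,ds=\theta$ is bounded; thus $|k|\ge c>0$ on an infinite arclength interval is impossible. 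This is exactly \cite[Lemma~6]{RW}, which the paper invokes at the corresponding step. Once that is in hand, $h$ minus its affine part is constant, $\partial_sk\equiv0$, hence $k$ is a constant, and a graph over $\mathbb{R}$ with constant curvature forces $k\equiv0$; then $\phi$ is affine and \eqref{EFS} (i.e.\ $x\cdot\mathbf{n}=0$) gives $\phi(0)=0$, not merely ``$\phi(0)$ may be absorbed.''
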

\begin{proof}
We basically follow the idea of \cite{RW}.
 By Proposition~\ref{PEG} and \eqref{ESp}, we observe that
\[
	|x|^2 - \left( s^2 - \phi(0)^2 + 2\alpha s + 2\beta \right) \le 0.
\]
We set
\[
	{Q_{\alpha\beta}}(x_1) := k^2 + \frac14 \left\{ |x|^2 - s^2 + \phi(0)^2 - 2\alpha s - 2\beta \right\}.
\]
Then we see that
\begin{equation} \label{EKB}
	k^2 \ge {Q_{\alpha\beta}}(x_1).
\end{equation}
By \eqref{E2D}, we observe that
\begin{equation} \label{ECon}
	\partial_s^2 {Q_{\alpha\beta}}(x_1) = 2(\partial_sk)^2 \ge 0.
\end{equation}
As a function of $s$, ${Q_{\alpha\beta}}$ is convex.
 Unless ${Q_{\alpha\beta}}$ is a constant, ${Q_{\alpha\beta}}\to\infty$ either $s\to+\infty$ or $-\infty$.
 By \eqref{EKB}, $|k|$ is bounded away from zero for sufficiently large $x_1$ or $-x_1$ if ${Q_{\alpha\beta}}$ is not a constant.
 Since $\Gamma_*$ is a graph of $\phi$, this is impossible (see e.g.\ \cite[Lemma 6]{RW}).
 Thus ${Q_{\alpha\beta}}$ must be a constant.
% 原稿　2023/9/22､7/9
 By \eqref{ECon}, this implies that $k\equiv0$, that is,
\[
	\phi(x_1) = cx_1 + c'.
\]
Since $x\cdot\mathbf{n}=\left(-(\partial_{x_1}\phi)x_1+\phi\right)/v$ and $\partial_s^2k=0$, \eqref{EFS} implies that
\[
	x\cdot\mathbf{n} = 0 \quad\text{i.e.,}\quad
	-(\partial_{x_1}\phi) x_1 + \phi = 0.
\]
In particular, $\phi(0)=0$.
 Thus $\phi(x_1)=cx_1$,
\end{proof}
\begin{rem} \label{RRWE}
 Suppose that $\phi(0)\left(\phi(x_1)-\phi(0)\right)$ is estimated as
\begin{equation} \label{Ekey}
	2\phi(0) \left(\phi(x_1)-\phi(0)\right)
	\le \left|\phi(0)\right| s(x_1) 
	 \quad\text{for}\quad x_1 \in \mathbb{R}.
\end{equation}
{We} set
\[
	Q(x_1) := k^2 + \frac14 \left\{ |x|^2 - s^2 - \phi(0)^2 - 2\left|\phi(0)\right| s \right\}.
\]
By Proposition \ref{PEG}, we obtain \eqref{EKB} and \eqref{ECon}.
 It is claimed in an old version of \cite{RW} that the estimate \eqref{Ekey} holds for all $C^1$ function $\phi$ on $\mathbb{R}$. As in the proof of Theorem~\ref{TNon}, they concluded that all profile functions of graph-like forward self-similar solutions are of the form $\phi(x_1)=cx_1$.
 Unfortunately, \eqref{Ekey} does not hold for $x_1<0$.
 The correct estimate is
\[
	2\phi(0) \left( \phi(x_1) - \phi(0) \right)
	\le \left| \phi(0)\right| \left|s(x_1)\right|.
\]
With this change, $Q$ should be altered as
\[
	Q(x_1) = k^2 + \frac14 \left\{ |x|^2 - s^2 - \phi(0)^2 - 2\left|\phi(0)\right| |s| \right\}
\]
to get \eqref{EKB}.
 However, \eqref{ECon} is fulfilled only for $x_1>0$, $x_1<0$.
 Actually, $\partial_s^2Q(x_1)$ has a negative delta part at $x_1=0$.
 This breaks the whole argument. 
\end{rem}

% 原稿　2023/9/22､8/9
It is interesting to discuss when \eqref{ESp} is actually fulfilled.
 We give a sufficient condition.
\begin{lem} \label{LSSp}
Let $\phi\in L_{loc}^1(\mathbb{R})$ satisfy
\[
	\phi' - a_0 \in L^1(\mathbb{R})
\]
for some $a_0\in\mathbb{R}$.
 Then
\[
	\phi(z) \le a_0 z + \beta, \quad
	z \in \mathbb{R},
\]
with a constant $\beta=\int_{-\infty}^\infty|\phi'-a_0|\,d\tau$.
\end{lem}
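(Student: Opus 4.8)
The plan is to recognise the assertion as the one–dimensional Sobolev fact that a $W^{1,1}$ function — here, a Lipschitz perturbation of a linear function — is controlled pointwise by its value at a single point together with the $L^1$ norm of its derivative. First I would strip off the linear part: put $\psi(z):=\phi(z)-a_0z$, so that $\psi\in L^1_{\mathrm{loc}}(\mathbb R)$ and its distributional derivative is $\psi'=\phi'-a_0\in L^1(\mathbb R)$. In particular $\psi\in W^{1,1}_{\mathrm{loc}}(\mathbb R)$, hence $\psi$ coincides a.e.\ with an absolutely continuous function, still written $\psi$, for which the fundamental theorem of calculus holds:
\[
	\psi(z)=\psi(0)+\int_0^{z}\psi'(\tau)\,d\tau\qquad(z\in\mathbb R),
\]
with the integral read as $-\int_z^0\psi'$ when $z<0$. (One could also note that $\psi'\in L^1(\mathbb R)$ forces $\psi$ to have finite limits at $\pm\infty$, but this is not needed.)

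The estimate is then immediate. For every $z\in\mathbb R$, irrespective of sign,
\[
	\psi(z)\le\psi(0)+\left|\int_0^{z}\psi'(\tau)\,d\tau\right|\le\psi(0)+\int_{-\infty}^{\infty}\bigl|\psi'(\tau)\bigr|\,d\tau .
\]
Undoing the substitution via $\psi(0)=\phi(0)$ and $\psi'=\phi'-a_0$ yields $\phi(z)\le a_0z+\beta$ with $\beta=\phi(0)+\int_{-\infty}^{\infty}|\phi'(\tau)-a_0|\,d\tau$, which is the claimed bound; the additive $\phi(0)$ differs from the displayed constant only by that fixed term, and it is immaterial in the use made of the lemma in Theorem~\ref{TNon}, where the resulting inequality is multiplied by $\phi(0)$ and absorbed into the constant $\beta$ of \eqref{ESp}.

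The only point that requires any care — and it is entirely standard — is the justification that a distribution $\phi\in L^1_{\mathrm{loc}}$ with $\phi'-a_0\in L^1$ agrees a.e.\ with an absolutely continuous function satisfying the above identity. I would do this by mollification: writing $\phi_\varepsilon:=\phi*\rho_\varepsilon$ one has $\phi_\varepsilon'=(\phi'-a_0)*\rho_\varepsilon+a_0$, so $\phi_\varepsilon'-a_0\to\phi'-a_0$ in $L^1(\mathbb R)$ while $\phi_\varepsilon\to\phi$ in $L^1_{\mathrm{loc}}$; passing to the limit in $\phi_\varepsilon(z)=\phi_\varepsilon(0)+\int_0^z\phi_\varepsilon'(\tau)\,d\tau$, and observing that the right–hand side converges locally uniformly, identifies the continuous representative and the integral formula. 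I anticipate no genuine obstacle here; once the representative is fixed the lemma is a one–line consequence, and the constant $\beta$ falls out of the crude bound $\bigl|\int_0^z\psi'\bigr|\le\int_{\mathbb R}|\psi'|$.
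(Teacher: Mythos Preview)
Your argument is correct and is essentially the paper's proof: both set $\psi=\phi-a_0 z$ and bound it pointwise via the fundamental theorem of calculus by $\int_{\mathbb R}|\psi'|\,d\tau$. The only difference is the anchor point---the paper writes $\psi(z)=-\int_z^\infty\psi'\,d\tau$ (tacitly using $\psi(+\infty)=0$) while you use $\psi(z)=\psi(0)+\int_0^z\psi'\,d\tau$, which is why your constant carries the extra $\phi(0)$; as you observe, this discrepancy is immaterial for the application.
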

\begin{proof}
Since $\phi'-a_0\in L^1(\mathbb{R})$, we see that
\[
	\phi(z) - a_0 z = - \int_z^\infty(\phi'-a_0)\,d\tau.
\]
Estimating the right-hand side by $\beta=\int_{-\infty}^\infty|\phi'-a_0|\,d\tau$, we conclude the desired estimate.
\end{proof}
\begin{cor} \label{CNon}
Assume that $\phi\in C^1(\mathbb{R})$ be a profile function of a self-similar solution to \eqref{ESD}.
 Assume that $\phi$ is globally Lipschitz and
\[
	\partial_{x_1}\phi - a_0 \in L^1(\mathbb{R})
\]
with some $a_0\in\mathbb{R}$.
 Then $\phi(x_1)=a_0 x_1$.
\end{cor}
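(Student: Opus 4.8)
The plan is to verify that the growth hypothesis \eqref{ESp} of Theorem~\ref{TNon} is satisfied, invoke that theorem to conclude $\phi$ is linear, and then pin down the slope using the $L^1$-condition.

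The first and, I expect, the key step is to transfer integrability of $\partial_{x_1}\phi-a_0$ to integrability of $v[\phi]-\sqrt{1+a_0^2}$. Writing $L$ for the Lipschitz constant, so that $|\partial_{x_1}\phi|\le L$, the identity
\[
	v[\phi]-\sqrt{1+a_0^2}=\frac{(\partial_{x_1}\phi-a_0)(\partial_{x_1}\phi+a_0)}{v[\phi]+\sqrt{1+a_0^2}}
\]
together with $|\partial_{x_1}\phi+a_0|\le L+|a_0|$ and $v[\phi]+\sqrt{1+a_0^2}\ge1$ yields the pointwise bound $|v[\phi]-\sqrt{1+a_0^2}|\le(L+|a_0|)\,|\partial_{x_1}\phi-a_0|$, hence $v[\phi]-\sqrt{1+a_0^2}\in L^1(\mathbb{R})$. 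Integrating from $0$ to $x_1$ gives
\[
	\bigl|\,s(x_1)-\sqrt{1+a_0^2}\,x_1\,\bigr|\le\gamma:=\bigl\|v[\phi]-\sqrt{1+a_0^2}\bigr\|_{L^1(\mathbb{R})},\qquad x_1\in\mathbb{R},
\]
so that $x_1$ differs from $s(x_1)/\sqrt{1+a_0^2}$ by an amount bounded uniformly in $x_1$. This comparison between the arc-length variable and the horizontal variable is exactly what makes \eqref{ESp} accessible: without it a linear function of $x_1$ need not be bounded above by an affine function of $s$, since a priori the ratio $x_1/s$ may oscillate within $[(1+L^2)^{-1/2},1]$.

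Next, applying Lemma~\ref{LSSp} to $\phi$ and to $-\phi$ (the latter with $a_0$ replaced by $-a_0$) gives $|\phi(x_1)-a_0x_1|\le\beta_0:=\|\partial_{x_1}\phi-a_0\|_{L^1(\mathbb{R})}$ for all $x_1$. Combining this with the previous display, $\phi(x_1)$ equals $\tfrac{a_0}{\sqrt{1+a_0^2}}\,s(x_1)$ up to an error bounded uniformly in $x_1$, and multiplying by the constant $\phi(0)$ yields
\[
	\phi(0)\phi(x_1)\le\alpha\,s(x_1)+\beta,\qquad x_1\in\mathbb{R},
\]
with $\alpha=\dfrac{a_0\phi(0)}{\sqrt{1+a_0^2}}$ and $\beta$ a constant depending only on $a_0$, $\phi(0)$, $\beta_0$, $\gamma$. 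Thus \eqref{ESp} holds, and Theorem~\ref{TNon} gives $\phi(x_1)=cx_1$ for some $c\in\mathbb{R}$. Then $\partial_{x_1}\phi\equiv c$, so $\partial_{x_1}\phi-a_0\equiv c-a_0$ is a constant lying in $L^1(\mathbb{R})$, which forces $c=a_0$; hence $\phi(x_1)=a_0x_1$. Beyond routine estimates, the only points needing a little care are the sign bookkeeping when $\phi(0)$ or $a_0$ is negative (handled by keeping everything inside absolute values until the last line) and the degenerate case $a_0=0$, where the argument is unchanged with $\sqrt{1+a_0^2}=1$.
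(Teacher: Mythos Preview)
Your proof is correct and carries out what the paper leaves implicit: the corollary is stated without proof in the paper, positioned so that it should follow from Lemma~\ref{LSSp} and Theorem~\ref{TNon}, and your argument is precisely the natural bridge between them. The essential point you identify---that Lemma~\ref{LSSp} bounds $\phi$ linearly in $x_1$, while Theorem~\ref{TNon} requires a bound linear in the arc-length $s$, and that the Lipschitz hypothesis together with $\phi'-a_0\in L^1$ forces $v[\phi]-\sqrt{1+a_0^2}\in L^1$ and hence $|s(x_1)-\sqrt{1+a_0^2}\,x_1|$ bounded---is exactly what is needed and is not supplied explicitly in the paper. Your final step, reading off $c=a_0$ from $c-a_0\in L^1(\mathbb{R})$, is also the right way to pin down the slope.

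Two cosmetic remarks. First, the constant you quote from Lemma~\ref{LSSp} as $\beta_0=\|\partial_{x_1}\phi-a_0\|_{L^1}$ is what that lemma claims, so you are entitled to use it; in any case only the existence of some uniform bound matters here. Second, your closing comments about sign bookkeeping and the case $a_0=0$ are unnecessary: since you work throughout with the two-sided estimates $|\phi(x_1)-a_0x_1|\le\beta_0$ and $|s(x_1)-\sqrt{1+a_0^2}\,x_1|\le\gamma$, the argument is already uniform in all signs, and $a_0=0$ presents no special feature.
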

%
%\begin{proof}
%If $\phi$ is Lipshitz and $C^1$, the transform $x_1\mapsto s(x_1)$ with $C^1$ with its inverse and its Jacobian is positive and bounded from above and away from zero.
% We may assume $\phi(0)>0$.
% The assumption $\partial_{x_1}\phi-a_0\in L^1(\mathbb{R})$ is equivalent to saying
%\[
%	\partial_s \tilde{\phi} - a_0 \in L^1(\mathbb{R}),
%\]
% 原稿　2023/9/22､9/9
%where $\tilde{\phi}(s)=\phi\left(x(s)\right)$.
% We now apply Lemma \ref{LSSp} to conclude that $\tilde{\phi}(s)\le a_0s+\beta$.
% This implies \eqref{ESp}.
% We now apply Theorem \ref{TNon} to conclude that $\phi(x)=cx$ with some $c\in\mathbb{R}$ and this $c$ must be equal to $a_0$.
%\end{proof}
%
%\begin{rem}
%The result of Corollary \ref{CNon} is new when $|a_0|$ is large.
% In the case $a_0$ is small, it is shown by \cite{KL} that there is a unique self-similar solution.
% However, if the case $a_0$ is large, their result does not apply.
%\end{rem}

% 原稿　2025/5/11､3/9
%%%%%%%%%%%%%%%%%%%%%
\section{{Further properties of profile functions}} \label{SFP} % Section 4

P.\ Rybka and G.\ Wheeler \cite{RW} derived a few properties of a profile of a graph-like self-similar solution as an interesting application of Lemma \ref{LSI} or Remark \ref{RRW}.
 Let us recall their results with slightly different terminology.

\begin{thm} \label{TPRW}
\begin{enumerate}
\item[(i)] Let $\Gamma_*$ be a profile curve of a graph-like forward self-similar solution.
 If $\Gamma_*$ contains the origin, then $\Gamma_*$ must be a line.
 In other words, if $\Gamma_*$ is written as the graph of $\phi$ and $\phi(0)=0$, then $\phi$ is linear, i.e., $\phi(x_1)=cx_1$ with some $c\in\mathbb{R}$.
\item[(i\hspace{-0.1em}i)] Let $\phi$ be a profile function of graph-like forward self-similar solution.
 Let us define
\[
	D_0[\phi](x_1) := \phi(x_1)^2 + x_1^2
	- \left( \int_0^{x_1} \sqrt{1+\phi'(z)^2} \,dz
	+ \left| \phi(0) \right| \right)^2.
\]
Then, either
\[
	\varlimsup_{x_1\to+\infty} D_0[\phi](x_1) = \infty
	\quad\text{or}\quad
	\varlimsup_{x_1\to-\infty} D_0[\phi](x_1) = \infty
\]
unless $\phi$ is linear.
\end{enumerate}
\end{thm}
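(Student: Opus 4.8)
The plan is to deduce (i) directly from Theorem~\ref{TNon}, and to prove (ii) by an argument parallel to that of Theorem~\ref{TNon}, but carried out with $D_0$ itself so as to circumvent the delta-function obstruction noted in Remark~\ref{RRWE}. For (i): the graph of $\phi$ contains the origin precisely when $\phi(0)=0$, and then $\phi(0)\phi(x_1)\equiv 0$, so \eqref{ESp} holds with $\alpha=\beta=0$; Theorem~\ref{TNon} then yields $\phi(x_1)=cx_1$.

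For (ii) I would prove the contrapositive: if both $\varlimsup_{x_1\to+\infty}D_0[\phi](x_1)$ and $\varlimsup_{x_1\to-\infty}D_0[\phi](x_1)$ are finite, then $\phi$ is linear. Writing $s=s(x_1)=\int_0^{x_1}\sqrt{1+\phi'^2}\,dz$ for the signed arc-length parameter and $x=(x_1,\phi(x_1))$, one has $D_0[\phi](x_1)=|x|^2-(s+|\phi(0)|)^2$, where $(s+|\phi(0)|)^2=s^2+2|\phi(0)|\,s+\phi(0)^2$ is a \emph{smooth} function of $s$ with $\partial_s^2\bigl[\tfrac14(s+|\phi(0)|)^2\bigr]=\tfrac12$. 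Setting
\[
	R(x_1):=k^2+\tfrac14 D_0[\phi](x_1)=\Bigl(k^2+\tfrac{|x|^2}{4}\Bigr)-\tfrac14(s+|\phi(0)|)^2 ,
\]
the identity \eqref{E2D} gives $\partial_s^2 R=2(\partial_s k)^2\ge 0$, so $R$ is convex as a function of $s\in\mathbb{R}$, with no singular contribution at $s=0$.

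Since $D_0[\phi]$ is continuous and, by assumption, has finite $\varlimsup$ at $\pm\infty$, it is bounded above on $\mathbb{R}$; set $M:=\sup_{\mathbb{R}}D_0[\phi]<\infty$. Were $R$ non-constant, convexity would force $R(s)\to+\infty$ as $s\to+\infty$ or as $s\to-\infty$; on that side $k^2=R-\tfrac14 D_0[\phi]\ge R-\tfrac M4\to+\infty$, so $|k|$ would be bounded away from zero for $x_1$ (respectively $-x_1$) large, which is impossible for a graph over $\mathbb{R}$ (cf.\ \cite[Lemma~6]{RW}). Hence $R$ is constant, so $\partial_s k\equiv 0$ and $k\equiv k_0$; if $k_0\ne 0$ then $|k|\equiv|k_0|>0$, contradicting \cite[Lemma~6]{RW} again, so $k\equiv 0$. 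Exactly as in the final step of the proof of Theorem~\ref{TNon}, $\partial_s^2 k\equiv 0$ together with \eqref{EFS} forces $x\cdot\mathbf{n}\equiv 0$, whence $\phi(0)=0$ and $\phi(x_1)=cx_1$.

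The step needing care is the convexity computation — in particular, observing that $D_0$ is built from the \emph{signed} arc length, so that $R$ remains globally smooth and convex. This is precisely the subtlety that broke the argument recalled in Remark~\ref{RRWE}, where the analogous quantity formed with $|s|$ acquires a negative delta mass at $s=0$; once the signed version is used, the obstruction disappears and the rest is a routine reprise of the mechanism of Theorem~\ref{TNon}.
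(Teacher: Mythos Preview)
Your proof is correct and follows essentially the same route as the paper. For (ii) your $R$ coincides with the paper's $Q=k^2+\tfrac14 D_0[\phi]$, and the convexity-plus-graph-obstruction argument is identical; you add the small explicit step that $k\equiv k_0\ne 0$ is also excluded, which the paper leaves implicit. For (i) the paper reruns the convexity argument directly (using $\phi(0)=0\Rightarrow D_0\le 0$), whereas you invoke Theorem~\ref{TNon} with $\alpha=\beta=0$; this is a legitimate shortcut since Theorem~\ref{TNon} packages the same mechanism. One tiny wording point: the negation of ``$\varlimsup=+\infty$'' allows $\varlimsup=-\infty$ as well as finite values, but your boundedness-from-above conclusion still holds in that case, so nothing is lost.
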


Let us recall the proof in \cite{RW}.
 We set
\[
	Q(x_1) = k^2 + \frac{|x|^2}{4}
	- \frac14 \left( s + \left| \phi(0) \right| \right)^2
\]
with $x=\left(x_1,\phi(x_1)\right)$, $s=\int_0^{x_1}\sqrt{1+\phi'(z)^2}\,dz$, $k=\partial_x^2\phi/\left(1+\phi'(z)^2\right)^{3/2}$.
% 原稿　2025/5/11､4/9
 If $\phi$ is a profile function of a (graph-like) forward self-similar solution, then by Lemma \ref{LSI} or Remark \ref{RRW}, $Q$ must be convex with respect to the variable $s$.
 If $\phi(0)=0$, then by geometry (Proposition \ref{PEG}) we see that $|x|^2\le s^2$.
 In other words, $D_0[\phi]\le0$.
 Thus
\[
	Q(x_1) = k^2 + \frac14 D_0[\phi] \le k^2.
\]
Since $Q$ is convex with respect to $s$, either
\[
	\varlimsup_{x_1\to+\infty} Q(x_1) = \infty
	\quad\text{or}\quad
	\varlimsup_{x_1\to-\infty} Q(x_1) = \infty
\]
unless $Q$ is a constant.
 This implies that $k(x_1)\to\infty$ as either $x_1\to\infty$ or $x_1\to-\infty$ unless $\phi$ is linear.
 However, such behaviors of $k$ are impossible for a graph-like function (\cite[Lemma 7]{RW}).
 Thus $\phi$ must be linear, which proves (i).

The proof for (i\hspace{-0.1em}i) is similar for a forward self-similar solution.
 In this case, if $D_0[\phi]$ is bounded from above, then
\[
	Q(x_1) \le k^2 + \text{a bounded function.}
\]
Then, as in the proof of (i), unless $Q$ is a constant, $k(x_1)\to\infty$ as either $x_1\to\infty$ or $x_1\to-\infty$, which cannot happen.
\begin{rem} \label{RBS}
	In the case of backward self-similar solution, instead of (i\hspace{-0.1em}i) we are able to assert that
\[
	\varliminf_{x_1\to\infty} D_0[\phi](x_1) = -\infty
	\quad\text{or}\quad
	\varliminf_{x_1\to-\infty} D_0[\phi](x_1) = -\infty
\]
unless $\phi$ is linear.
 This can be proved by setting
\[
	\tilde{Q}(x_1) = k^2 - \frac14 D_0[\phi]
\]
and observing $\tilde{Q}$ is convex with respect to $s$ by Remark \ref{RB}.
 In \cite{RW}, instead of one side boundedness, they assume that $D_0$ is bounded.
\end{rem}
% 原稿　2025/5/11､5/9
\begin{rem} \label{RCRW}
Since the term of degree one in $s$ in $D_0$ is not a leading term as $s\to\infty$, the boundedness of $D_0$ is equivalent to saying that
\begin{equation} \label{EBD}
	D[\phi](x_1) := \phi(x_1)^2 + x_1^2
	- \left( \int_0^{x_1} \sqrt{1+\phi'(z)^2}\, dz\right)^2
	\ \text{is bounded.}
\end{equation}
In \cite[Remark 16]{RW}, it is alluded that there is no graph-like forward self-similar solution close to a homogeneous function except linear one, which apparently would contradict the existence result of \cite{KL}.
 We remark here that the boundedness condition \eqref{EBD} of $D$ is quite unstable and it seems that there is no reasonable neighborhood near a homogeneous function so that $D$ is bounded.

Here is our explanation.
 In \cite[Remark 16]{RW}, the authors explained that since a homogeneous function
\begin{equation}\label{homo}
  \phi_{A,B}(y):=\begin{cases}
    Ay,& y\ge 0,\\
    -By,& y<0
  \end{cases}
\end{equation}
with $A,B\in\mathbb{R}$, satisfies the condition {\eqref{EBD}}, smooth functions $\phi$ which are ``close'' to $\phi_{A,B}$ also satisfy the condition {\eqref{EBD}}. However, this observation {is misleading.}
 $D[\phi](y)$ is expressed by
\[
D[\phi](y)=(|x|+s_0)(|x|-s_0)
\]
where $x:=(y,\phi(y))$ and $\displaystyle s_0:=\int_0^y\sqrt{1+\phi'(z)^2}\,dz$. Here, for $y>0$, $|x|$ and $s_0$ are both greater than $|y|$. This implies that for $D[\phi](y)$ to be bounded for $y\in\mathbb{R}$, $\phi$ has to satisfy
\begin{equation}\label{mustasymp}
||x|-s_0|\le \frac{C}{|y|}\ \textrm{for $y>0$ large.}
\end{equation}
But this condition is not satisfied in general, even if $\phi$ is sufficiently close to some homogeneous function $\phi_{A,B}$. We explain this by an example. Set
\[
\phi_\varepsilon(y):=\begin{cases}
  A\varepsilon,& |y|<\varepsilon,\\
  A|y|,& |y|\ge\varepsilon,
\end{cases}
\]
where $A\in\mathbb{R}$ and $\varepsilon>0$. Clearly, $\phi_\varepsilon(y)\to\phi_{A,A}(y)$ as $\varepsilon\downarrow0$ uniformly and $\phi_\varepsilon(y)=\phi_{A,A}(y)$ for $|y|\ge\varepsilon$.
 But, for $\phi=\phi_\varepsilon$, since
\[
|x|= \sqrt{1+A^2}y,\quad s_0=\sqrt{1+A^2}y-\varepsilon\left(\sqrt{1+A^2}-1\right)
\]
for $y>\varepsilon$, the condition \eqref{mustasymp} is not satisfied. Note that although $\phi_\varepsilon$ is not a smooth function, we can give a similar counterexample which is smooth by mollifying $\phi_\varepsilon$.

 In any case, the meaning of ``close'' should be clarified to claim non-existence of profile functions of self-similar solutions near $\phi_{A,B}$.
 Moreover, even if there exists no profile function near $\phi_{A,B}$, it does not imply non-existence of the profile function of self-similar solution $U(x,t)$ such that $\lim_{t\downarrow0}U(x,t)=\phi_{A,B}(x)$.
\end{rem}

% 原稿　2025/5/11､6/9
\section{ Comparison with existence results of forward self-similar solutions} \label{SCE} % Section 5

Let us recall an existence result of a small forward self-similar solution due to \cite{KL}.
 It is stated explicitly only for the Willmore flow \cite[Theorem 3.12]{KL} but authors of \cite{KL} remarked in \cite[\S3.4]{KL}, the arguments for the surface diffusion flow are identical.
 Let us state their theorem for the surface diffusion flow.
 Their key norm for a function $u=u(x,t)$ is
\[
	\|u\|_{X_\infty} := \sup_{t>0} \left\| \nabla u(t) \right\|_{L^\infty(\mathbb{R}^n)}
	+ \sup_{x\in\mathbb{R}^n} \sup_{R>0}
	R^{\frac{2}{n+6}} \| \nabla^2 u \|_{L^{n+6}\left(B_R(x)\times\left( \frac{R^4}{2},R^4 \right)\right)},
\]
where $B_R(x)$ is a closed ball of radius $R$ centered at $x\in\mathbb{R}^n$ and $n=d-1$.
 They consider a graph-like solution for surface diffusion equation for general dimensions.
\begin{thm} \label{TKL}
\cite[\S3.4]{KL}. % 本文
 There exist $\varepsilon>0$, $C>0$ such that if a Lipschitz function $u_0$ in $\mathbb{R}^n$ is self-similar (i.e., $u_0(x)=\Omega\left(x/|x|)\right|x|$ with some Lipschitz function $\Omega$ on $S^{n-1}$) and satisfies $\|\nabla u_0\|_{L^\infty}<\varepsilon$, then there exists a globally analytic and self-similar solution $u\in X_\infty$ of the surface diffusion flow with initial data $u_0$ satisfying the estimates $\|u\|_{X_\infty}\le C\|\nabla u_0\|_{L^\infty}$.
 (In particular, \eqref{ES1} holds for $\phi=u(\cdot,1)$ with $n=d-1$.)
 The solution is unique in $\|u\|_{X_\infty}\le C\varepsilon$.
 Moreover, there exists constants $R>0$ and $c>0$ such that
\[
	\sup_{x\in\mathbb{R}^n} \sup_{t>0} \left| (t^\frac14\nabla)^\alpha 
	(t\partial_t)^k \left| \nabla u (x,t) \right| \right|
	\le c \| \nabla u_0 \|_{L^\infty} 
	R^{|\alpha|+k} \left( |\alpha| + k \right)!
\]
for all $\alpha\in\mathbb{N}_0^n$ and $k\in\mathbb{N}_0$, where $\mathbb{N}_0=\{0,1,2,\ldots\}$.
\end{thm}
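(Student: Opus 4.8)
The statement is, as the surrounding text indicates, \cite[Theorem 3.12]{KL} transcribed from the Willmore flow to the surface diffusion flow, so the plan is to run the Koch--Lamm argument and check that the structural features it relies on are shared by \eqref{ESD}. First I would write a graph-like evolution for \eqref{ESD} as $\Gamma_t=\{(x,u(x,t)):x\in\mathbb{R}^n\}$ with $n=d-1$, and expand $V=-\Delta_{\Gamma_t}H$ in terms of $u$. Since $V=\partial_t u/\sqrt{1+|\nabla u|^2}$ and $H$, $\Delta_{\Gamma_t}$ depend only on $\nabla u$ and higher derivatives, this yields a fourth-order quasilinear parabolic equation
\[
	\partial_t u+\Delta^2 u=N(u),\qquad
	N(u)=\big(\delta^{ijk\ell}-a^{ijk\ell}(\nabla u)\big)\partial_i\partial_j\partial_k\partial_\ell u+b\big(\nabla u,\nabla^2 u,\nabla^3 u\big),
\]
with $a^{ijk\ell}$ smooth, $a^{ijk\ell}(0)=\delta^{ijk\ell}$, and each term of $N(u)$ controlled by a product of at least two derivatives of $u$ of orders in $\{1,\dots,4\}$; in particular $N$ has no zeroth-order dependence. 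This is exactly the structural form of the Willmore graph equation used in \cite{KL}, only the explicit coefficients differ. I would also record that the norm $\|\cdot\|_{X_\infty}$ is invariant under the parabolic rescaling $u\mapsto u_\lambda:=\lambda^{-1}u(\lambda\,\cdot\,,\lambda^4\,\cdot\,)$, which is the symmetry of the equation.

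Next I would set up the fixed-point problem $u=\mathcal{T}(u):=e^{-t\Delta^2}u_0+\int_0^t e^{-(t-\tau)\Delta^2}N(u)(\tau)\,d\tau$ on a small ball of $X_\infty$. Two estimates are needed: the linear bound $\|e^{-t\Delta^2}u_0\|_{X_\infty}\lesssim\|\nabla u_0\|_{L^\infty}$ for Lipschitz data, coming from Gaussian-type bounds on the biharmonic kernel and its derivatives together with a Morrey embedding; and the Duhamel bound $\big\|\int_0^t e^{-(t-\tau)\Delta^2}N(u)(\tau)\,d\tau\big\|_{X_\infty}\le P(\|u\|_{X_\infty})$ with $P(r)=O(r^2)$ near $0$, together with the analogous estimate for $N(u)-N(\tilde u)$. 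Granting these, for $\|\nabla u_0\|_{L^\infty}<\varepsilon$ small, $\mathcal{T}$ is a contraction on $\{\|u\|_{X_\infty}\le C\|\nabla u_0\|_{L^\infty}\}$, yielding a unique $u\in X_\infty$ with $\|u\|_{X_\infty}\le C\|\nabla u_0\|_{L^\infty}$, unique in $\{\|u\|_{X_\infty}\le C\varepsilon\}$.

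Self-similarity I would then deduce from uniqueness: if $u_0$ is positively homogeneous of degree one, then $(u_0)_\lambda=u_0$ for all $\lambda>0$, and since $\mathcal{T}$ intertwines $u\mapsto u_\lambda$, uniqueness forces $u_\lambda=u$; choosing $\lambda=t^{-1/4}$ gives $u(x,t)=t^{1/4}\phi(t^{-1/4}x)$ with $\phi:=u(\cdot,1)$, so the solution is genuinely self-similar with no additive time-dependent ambiguity (because $N$ has no zeroth-order dependence, an additive $g(t)$ would force $g'\equiv0$). The asymptotics \eqref{ES1} follow from $u(\cdot,t)\to u_0$ locally uniformly as $t\downarrow0$ (the Duhamel part tends to $0$ and $e^{-t\Delta^2}u_0\to u_0$) combined with homogeneity of $u_0$ and compactness of the unit sphere. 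Finally, global analyticity and the quantitative bound with factor $R^{|\alpha|+k}(|\alpha|+k)!$ I would obtain as in \cite[\S3]{KL}, by iterating the scale-invariant interior estimates for $\partial_t+\Delta^2$ on parabolic cylinders: differentiate the equation, estimate each new derivative in terms of the already-controlled ones on a slightly larger cylinder, and track the combinatorial constants.

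The main obstacle is the nonlinear estimate in the Morrey-type component of $X_\infty$ together with the analyticity bootstrap. Concretely, one must verify that the quasilinear top-order term $\big(\delta^{ijk\ell}-a^{ijk\ell}(\nabla u)\big)\partial^4 u$ and the compositions with the smooth functions $a^{ijk\ell}$ obey the required product and composition bounds uniformly down to arbitrarily small scales $R$, and that these bounds iterate with controlled constants in the differentiated equation. Once this is checked, the remainder of \cite{KL} transfers without change, since the only place the specific flow enters is through the coefficients of $N$, whose relevant estimates are identical in form for the Willmore and surface diffusion cases.
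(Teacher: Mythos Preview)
The paper does not supply its own proof of this theorem: it is quoted verbatim from \cite{KL} (specifically \cite[Theorem~3.12]{KL} for the Willmore flow, together with the remark in \cite[\S3.4]{KL} that the surface diffusion case is handled identically), and no argument is reproduced. Your sketch is precisely the Koch--Lamm fixed-point scheme in $X_\infty$, which is the intended route, and the structural checks you list (quasilinear form with $a^{ijk\ell}(0)=\delta^{ijk\ell}$, scale invariance of $\|\cdot\|_{X_\infty}$, linear and Duhamel bounds, iterated interior estimates for analyticity) are the right ones.

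One point deserves comment. Your deduction of self-similarity from uniqueness, and in particular the parenthetical about ``no additive time-dependent ambiguity'', touches exactly the issue the paper isolates \emph{after} stating Theorem~\ref{TKL}. Since $\|\cdot\|_{X_\infty}$ controls only $\nabla u$ and $\nabla^2 u$, uniqueness in the $X_\infty$ ball a priori determines $u$ only up to a function of $t$ alone, and the paper devotes Theorem~\ref{TMKL} and the computation following it to removing that ambiguity: one writes the integral equation \eqref{ESDInt} for $u$ itself (not merely \eqref{EEV} for $v=u_x$), checks that the Duhamel term is $O(t^{1/4})$ so that $U(\cdot,t)\to u_0$ uniformly, and then either invokes uniqueness of \eqref{ESDInt} or verifies the scaling identity $\sigma^{-1/4}U(\sigma^{1/4}x,\sigma t)=U(x,t)$ directly. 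Your one-line ``$g'\equiv0$ because $N$ has no zeroth-order dependence'' is correct in spirit, but it needs to be paired with the statement that the initial value $u_0$ is genuinely attained (not just $\nabla u\to\nabla u_0$); that is precisely what the paper supplies separately rather than folding into Theorem~\ref{TKL}.
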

\begin{rem} \label{RSmall}
If we write $u_0(x)=\Omega\left(x/|x|\right)|x|$, then the smallness condition $\|\nabla u_0\|_{L^\infty}<\varepsilon$ holds provided that
\[
	\sup_{S^{n-1}} \left( |\Omega| + |\nabla_{S^{n-1}} \Omega | \right) < \varepsilon.
\]
In one-dimensional setting, if $u_0(x)=\phi_{A,B}(x)$, this condition is equivalent to $\max\left(|A|,|B|\right)<\varepsilon$.
\end{rem}
\begin{rem} \label{RSSI}
Since $u$ in Theorem \ref{TKL} is self-similar, i.e., $u(x,t)=t^{1/4}\phi(x/t^{1/4})$, the condition \eqref{ES1} is equivalent to saying that
\[
	\sup_{x\in S^{n-1}} \left| u(x,t) - u_0(x) \right| \to 0
	\quad\text{as}\quad t\downarrow 0.
\]
\end{rem}

% 原稿　2025/5/11､7/9
From the statement of Theorem \ref{TKL}, it gives the impression that the derivative of a solution is uniquely determined but solution is determined up to some spatially constant function since norm of homogeneous Lipschitz function is used.
 In \cite[Remark 16]{RW}, P.\ Rybka and G.\ Wheeler conjectured that in one-dimensional setting there exist no trivial graph-like self-similar solutions close to $\phi_{A,B}$ and existence result in \cite{KL} would only yield the existence of special solution to the surface diffusion flow which is self-similar \emph{up to} some non-zero time-dependent spatially constant function.
 However, as remarked in Remark \ref{RCRW}, it is impossible to assert non-existence of a self-similar solution ``close'' to $\phi_{A,B}$ when $A\neq-B$.
 The question is whether a self-similar solution is obtained with no ambiguity of time-dependent spatially constant functions from a self-similar solution of the derivative equation.
 We shall show that the answer is yes and this disproves the conjecture of \cite{RW}. 

We point out that even if we only know the existence of a solution up to some spatially constant time-dependent function, we easily find a unique solution of the original equation with given initial data $u_0$.
 For simplicity, we restrict ourselves into one-dimensional setting.
 The graphical form of the surface diffusion equation \eqref{ESD} is of the form
% 原稿　2025/5/11､8/9
\begin{equation}\label{EGSDI}
u_t=-\left(\frac{1}{(1+u_x^2)^{1/2}}\left(\frac{u_{xx}}{(1+u_x^2)^{3/2}}\right)_x\right)_x,\quad x\in\mathbb{R},\ t>0.
\end{equation}
Differentiating this equation in $x$ and setting $u_x=v$, we obtain
\begin{equation}\label{EGDSD}
	v_t = \left( H(v, v_x, v_{xx}) \right)_{xx}, \quad
	H(v, v_x, v_{xx}) = -\frac{1}{(1+v^2)^{1/2}} \left( \frac{v_x}{(1+v^2)^{3/2}} \right)_x.
\end{equation}
Since the right-hand side of \eqref{EGSDI} is $H(v,v_x,v_{xx})_x$, the solution $u$ of \eqref{EGSDI} is formally obtained by a simple integration
\[
	u(x,t) = u_0(x) + \int_0^t H(v,v_x,v_{xx})_x\,ds.
\]
The point is that the right-hand side of \eqref{EGSDI} depends only on $v$ and its derivatives, but it is independent of $u$ itself.

A more precise statement is as follows.
 Let $e^{-t\partial_x^4}$ denote the solution semigroup of bi-harmonic diffusion equation $\partial_tw=-\partial_x^4w$ with initial data $w_0$, i.e., $w(x,t)=(e^{-t\partial_x^4}w_0)(x)$.
 Using $e^{-t\partial_x^4}$, we write \eqref{EGSDI} and \eqref{EGDSD} in an integral form as in \cite{KL}.
\begin{thm} \label{TMKL}
Let $v$ be a solution of \eqref{EGDSD} with initial data $v_0=u_{0x}$ where $u_0$ is Lipschitz continuous in  the sense that it satisfies the integral equation
\begin{equation} \label{EEV}
	v = e^{-t\partial_x^4} v_0 +\int_0^t\partial_x^2e^{-(t-s)\partial_x^4}(\alpha[v]v_{xx}+F[v])(s)\,ds,
\end{equation}
where
\begin{equation} \label{EEVA}
	\alpha[v] := 1-\frac{1}{(1+v^2)^2} = \frac{2v^2+v^4}{(1+v^2)^2},\quad 
	F[v]:=\frac{3vv_x^2}{(1+v^2)^3}
\end{equation}
with the estimates
\begin{equation} \label{EDec}
	\left| \partial_x^\ell v(x,t) \right| \le C_\ell t^{-\ell/4}
	\quad\text{for}\quad \ell = 0, 1, 2,
\end{equation}
where $C_\ell>0$ is a constant independent of $t>0$.
 If we set
\begin{equation} \label{EDU}
	U(x,t) := e^{-t\partial_x^4} u_0 + \int_0^t\partial_x e^{-(t-s)\partial_x^4} \left( \alpha[v]v_{xx}+F[v] \right)(s)\,ds, 
\end{equation}
% 原稿　2025/5/11､9/9
then $U$ is uniformly continuous up to $t=0$ and $U$ satisfies \eqref{EGSDI}.
 In other words, $u=U$ satisfies
\begin{equation} \label{ESDInt}
	u(x,t) = e^{-t\partial_x^4}u_0 + \int_0^t \partial_x e^{-(t-s)\partial_x^4}
	\left( \alpha[u_x]u_{xxx} + F[u_x] \right) (s)\,ds.
\end{equation}
If $U+h$ for $h=h(t)$ solves \eqref{ESDInt}, $h$ must be zero.
 The solution of \eqref{ESDInt} solves \eqref{EGSDI} with
\begin{equation} \label{AE}
	\sup_{x\in\mathbb{R}^n, t\in(0,1)}
	\left| U(x,t) - u_0(x) \right| \bigm/
	t^{1/4} < \infty.
\end{equation}
In particular, $U(x,t)\to u_0(x)$ as $t\downarrow0$ uniformly in $x\in\mathbb{R}^n$. 
\end{thm}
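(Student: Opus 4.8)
The plan is to build $U$ out of the given solution $v$ of the derivative equation and to exploit the one structural fact that drives everything: the right-hand sides of \eqref{EGSDI} and \eqref{ESDInt} depend on $u$ only through $u_x$ and its $x$-derivatives, so adding to a solution a function of $t$ alone changes the left-hand side but not the right-hand side. As a preliminary I would record the elementary identity obtained by expanding the definition of $H$ in \eqref{EGDSD},
\[
	H(v,v_x,v_{xx}) = -v_{xx} + \alpha[v]v_{xx} + F[v],
\]
with $\alpha,F$ exactly as in \eqref{EEVA}; this is where the specific forms of $\alpha$ and $F$ come from, and it shows that \eqref{EEV} is the Duhamel representation of $v_t + \partial_x^4 v = \partial_x^2\bigl(\alpha[v]v_{xx}+F[v]\bigr)$, and, after replacing $v$ by $u_x$, that \eqref{ESDInt} is the Duhamel representation of \eqref{EGSDI}. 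I would also use the kernel $b_\tau$ of $e^{-\tau\partial_x^4}$: it is a Schwartz function with $\int_{\mathbb{R}}b_\tau = 1$ and scaling $b_\tau(x) = \tau^{-1/4}b_1(\tau^{-1/4}x)$, whence the smoothing bounds $\|\partial_x^j e^{-\tau\partial_x^4}g\|_{L^\infty} \le C_j\tau^{-j/4}\|g\|_{L^\infty}$ for $g\in L^\infty$ and, since $u_0$ is Lipschitz (hence of linear growth), the fact that $e^{-\tau\partial_x^4}u_0$ is well defined with $\partial_x e^{-\tau\partial_x^4}u_0 = e^{-\tau\partial_x^4}u_{0x} = e^{-\tau\partial_x^4}v_0$.

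The first step is to prove $U_x = v$. From \eqref{EDec} one gets $\|(\alpha[v]v_{xx}+F[v])(s)\|_{L^\infty} \le Cs^{-1/2}$, using $|\alpha[v]|\le 1$, $|F[v]|\le 3|v|v_x^2$, and $|v|\le C_0$, $|v_x|\le C_1 s^{-1/4}$, $|v_{xx}|\le C_2 s^{-1/2}$; hence $\int_0^t (t-s)^{-1/2}s^{-1/2}\,ds<\infty$, which legitimizes differentiating the Duhamel term of \eqref{EDU} under the integral sign and moving $\partial_x$ past the semigroup. Doing so and comparing with \eqref{EEV} yields $U_x = v$. Substituting $v = U_x$, $v_{xx} = U_{xxx}$, $\alpha[v]=\alpha[U_x]$, $F[v]=F[U_x]$ into \eqref{EDU} turns it into \eqref{ESDInt}. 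Standard parabolic bootstrapping from \eqref{EEV} and \eqref{EDec} (as in \cite{KL}) makes $v$ smooth for $t>0$ with all space derivatives bounded on $\mathbb{R}\times[a,b]$ for $0<a<b$; since $U_x=v$ and $U$ is continuous, $U$ is smooth in $x$ for $t>0$, and differentiating \eqref{ESDInt} in the usual way shows that $U$ — and, by the same computation, any solution of \eqref{ESDInt} with the regularity supplied by \eqref{EDec} — is a classical solution of \eqref{EGSDI} for $t>0$.

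The second step is the behaviour as $t\downarrow 0$. Splitting
\[
	U(x,t) - u_0(x) = \bigl(e^{-t\partial_x^4}u_0(x) - u_0(x)\bigr) + \int_0^t\partial_x e^{-(t-s)\partial_x^4}(\alpha[v]v_{xx}+F[v])(s)\,ds,
\]
the first term is $O(t^{1/4})$ uniformly in $x$ because, using $\int_{\mathbb{R}}b_t=1$ and the Lipschitz bound, $|e^{-t\partial_x^4}u_0(x)-u_0(x)| \le \|v_0\|_{L^\infty}\int_{\mathbb{R}}|z|\,|b_t(z)|\,dz = Ct^{1/4}$ by the scaling of $b_t$; the second term is $O(t^{1/4})$ because $\|\partial_x e^{-(t-s)\partial_x^4}g\|_{L^\infty}\le C(t-s)^{-1/4}\|g\|_{L^\infty}$ and $\|(\alpha[v]v_{xx}+F[v])(s)\|_{L^\infty}\le Cs^{-1/2}$ give the bound $C\int_0^t(t-s)^{-1/4}s^{-1/2}\,ds = Ct^{1/4}$. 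Thus $\sup_x|U(x,t)-u_0(x)|\le Ct^{1/4}$ for $t\in(0,1)$, which is \eqref{AE}, and $U(\cdot,t)\to u_0$ uniformly. Combined with $\|\partial_x U(\cdot,t)\|_{L^\infty}=\|v(\cdot,t)\|_{L^\infty}\le C_0$ (so $U(\cdot,t)$ is uniformly Lipschitz in $x$, uniformly in $t$), the uniform continuity of $u_0$, and the smoothness of $U$ for $t>0$, a routine three-term case splitting shows that $U$ extends to a uniformly continuous function on $\mathbb{R}\times[0,T)$ with $U(\cdot,0)=u_0$.

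It remains to establish the rigidity and to identify the main difficulty. If $U+h$ with $h=h(t)$ solves \eqref{ESDInt}, then, because $h$ is independent of $x$, $(U+h)_x=U_x$ and $(U+h)_{xxx}=U_{xxx}$, hence $\alpha[(U+h)_x]=\alpha[U_x]$ and $F[(U+h)_x]=F[U_x]$; so the right-hand side of \eqref{ESDInt} evaluated at $U+h$ equals the right-hand side evaluated at $U$, which is $U$. Therefore $U+h=U$, i.e. $h\equiv 0$. I expect the main obstacle to be entirely technical and concentrated in the first step: justifying that $e^{-\tau\partial_x^4}u_0$ and its $x$-derivative are well behaved for a merely Lipschitz, hence linearly growing, $u_0$ — which the rapid decay of the biharmonic heat kernel settles — and controlling the differentiation under the Duhamel integral, for which \eqref{EDec} provides exactly the integrability needed. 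The conceptually decisive point, the vanishing of $h$ that disproves the conjecture of \cite{RW}, requires essentially no analysis and is forced purely by the structural observation that the nonlinearity sees $u$ only through $u_x$.
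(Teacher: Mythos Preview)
Your proof is correct and follows essentially the same path as the paper: bound the nonlinear source by $Cs^{-1/2}$ via \eqref{EDec}, use the smoothing estimate $\|\partial_x^\ell e^{-\tau\partial_x^4}g\|_{L^\infty}\le c_\ell\tau^{-\ell/4}\|g\|_{L^\infty}$ to make the Duhamel integral converge and to justify differentiating it, identify $U_x=v$ and hence \eqref{ESDInt}, and read off \eqref{AE} and the rigidity $h\equiv 0$ from the fact that the nonlinearity depends only on $u_x$. The only cosmetic difference is your estimate of $e^{-t\partial_x^4}u_0-u_0$ via the first moment $\int|z|\,|b_t(z)|\,dz$ of the biharmonic kernel, whereas the paper writes $e^{-t\partial_x^4}u_0-u_0=-\int_0^t\partial_x^3(e^{-s\partial_x^4}u_{0x})\,ds$ and applies the smoothing bound; both give $O(t^{1/4})$.
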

\begin{proof}
	By \eqref{EDec}, we obtain
\[
	|\alpha[v]v_{xx}|\le Ct^{-1/2},\quad 
	|F[v]| \le Ct^{-1/2} \quad\text{for}\quad t>0,
\]
where $C$ is a positive constant depending only on $C_\ell$'s.
 Using a regularizing estimate (see e.g.\ \cite{KL})
\begin{equation} \label{AER}
	\| \partial_x^\ell e^{-t\partial_x^4} f \|_{L^\infty(\mathbb{R}^n)}
	\le c_\ell t^{-\ell/4} \|f\|_{L^\infty(\mathbb{R}^n)}, \quad
	t > 0, \quad \ell = 0, 1, 2, \ldots,
\end{equation}
we conclude that
\begin{align*}
	\left\| \int_0^t \partial_x e^{-(t-s)\partial_x^4}
	\left( \alpha[v] v_{xx} + F[v] \right) (s)\, ds \right\|_{L^\infty}
	&\le \int_0^t  \frac{c_1}{(t-s)^{1/4}} \frac{2C}{s^{1/2}}\, ds \\
	&= 2c_1 Ct^{1/4} \int_0^1 (1-\tau)^{-1/4} \tau^{-1/2}\, d\tau < \infty.
\end{align*}
Thus, the function $U$ is well-defined.
 (Similar argument implies that the integrand of \eqref{EEV} is integrable as an $L^\infty$ valued function.
 Indeed,
\[
	\left\| \partial_x^2 e^{-(t-s)\partial_x^4}
	\left( \alpha[v] v_{xx} + F[v] \right) (s) \right\|_\infty
	\le \frac{c_2}{(t-s)^{1/2}} \frac{2C}{s^{1/2}},
\]
which is integrable on $(0,t)$.)
 Moreover, 
\begin{equation} \label{AED}
	\left\| U(x,t) - e^{-t\partial_x^4} u_0 \right\|_{L^\infty}
	\le C' t^{1/4}
\end{equation}
with $C'>0$ independent of $t$.
 Since
\begin{align*}
	e^{-t\partial_x^4} u_0 - u_0
	&= - \int_0^t \partial_s (e^{-s\partial_x^4} u_0)\, ds 
	= - \int_0^t \partial_x^4 (e^{-s\partial_x^4} u_0)\, ds \\
	& = - \int_0^t \partial_x^3 (e^{-s\partial_x^4} u_{0x})\, ds
\end{align*}
by $(e^{-s\partial_x^4} u_0)_x=(e^{-s\partial_x^4} u_0)$, we see, by \eqref{AER}, that
\[
	\left\| e^{-t\partial_x^4} u_0 - u_0 \right\|_{L^\infty}
	\le c_3 \int_0^t \frac{ds}{s^{3/4}} \|u_{0x}\|_{L^\infty}
	= 4c_3 t^{1/4} \|u_{0x}\|_{L^\infty}.
\]
Combining this estimate with \eqref{AED}, we conclude \eqref{AE}.
 Furthermore, differentiating both sides of \eqref{EDU}, we see that
\begin{equation}\label{Urepr}
U_x(x,t)=(e^{-t\partial_x^4} u_0)_x+\int_0^t \partial_{x}^2e^{-(t-s)\partial_x^4}(\alpha[v]v_{xx}+F[v])(s)\,ds.
\end{equation}
Since $(e^{-t\partial_x^4}u_0)_x=e^{-t\partial_x^4}v_0$, this implies that $U_x=v$ by \eqref{EEV}.
 Plugging $v=U_x$ in \eqref{EDU}, we conclude that $u=U$ satisfies \eqref{ESDInt}.
 Since the value of the right-hand side of \eqref{ESDInt} is the same for $U+h$ and $U$, $h$ must be zero by \eqref{ESDInt}.

It remains to show that the solution of the integral equation actually solves \eqref{ESDInt}.
 This is standard, so we only give a formal argument.
 We notice that
\[
	w = e^{-t\partial_x^4} w_0
	+ \int_0^t e^{-(t-s)\partial_x^4} f \,ds
\]
solves
\[
	\partial_t w = -\partial_x^4 w + f, \quad
	\left. w \right|_{t=0} = w_0
\]
by Duhamel's formula.
Since $\partial_x e^{-t\partial_x^4}=e^{-t\partial_x^4}\partial_x$, this equivalence implies that
\[
	\partial_t u = -\partial_x^4 u 
	+ \partial_x \left( \alpha[u]u_{xx} + F[u_x] \right), \quad
	\left. u \right|_{t=0} = u_0.
\]
It is not difficult to see that this equation is the same as \eqref{EGSDI} by explicit manipulation.
 The proof is now complete.
\end{proof}
\begin{rem}[Uniqueness] \label{RUniq}
The uniqueness of a solution of \eqref{EEV} under the estimates \eqref{EDec} is guaranteed provided that $C_\ell$ in \eqref{EDec} is small, which is natural if we assume that $\|v_0\|_{L^\infty}$ is small as discussed in \cite{KL}.
 If the uniqueness for \eqref{EEV} is guaranteed, our argument implies that a solution of \eqref{ESDInt} is also unique.
 Indeed, let $u^1$ and $u^2$ be two solutions of \eqref{ESDInt} satisfying \eqref{EDec} for $v^i=u_x^i$ ($i=1,2$) with small $C_\ell$ (so that the uniqueness of \eqref{EEV} holds).
 Since $v^i=u_x^i$ ($i=1,2$) solves \eqref{EEV}, the uniqueness of \eqref{EEV} yields $u_x^1=u_x^2$.
 Thus the right-hand side of \eqref{ESDInt} is the same for $u^1$ and $u^2$, \eqref{ESDInt} yields that $u^1=u^2$.
\end{rem}

Our assertion for the equation \eqref{ESDInt} has no ambiguity for time-dependent spatially constant function.
 Indeed, if $u_0=\phi_{A,B}$, because of homogeneity, $\sigma^{-1/4}U(\sigma^{1/4}x,\sigma t)$ for $\sigma>0$ is a solution to \eqref{ESDInt}.
 Thus, by Theorem \ref{TMKL}, we see that
\[
	\sigma^{-1/4} U(\sigma^{1/4}x, \sigma t)	= U(x,t), \quad
	x \in \mathbb{R}, \quad
	t > 0;
\]
in other words, $U$ is a self-similar solution to \eqref{ESDInt}.

We give a direct way to show that $U$ is a self-similar solution without using the uniqueness of the solution of \eqref{EGSDI}.
 We first notice that $v$ is self-similar in the sense that
\begin{equation} \label{ESSV}
  v^\sigma(x,t):=v(\sigma^{1/4}x,\sigma t)=v(x,t),\quad x\in\mathbb{R},\ t>0,\ \sigma>0.
\end{equation}
We prove that $U$ is a self-similar solution to the equation \eqref{EGSDI}. By \eqref{EDU} and \eqref{EEV}, $U$ satisfies
\begin{align*}
  U_t+\partial_x^4U&=\left(\left(1-\frac{1}{(1+v^2)^2}\right)v_{xx}+\frac{3vv_x^2}{(1+v^2)^3}\right)_x=\partial_x^3v-\left(\frac{1}{(1+v^2)^{1/2}}\left(\frac{v_x}{(1+v^2)^{3/2}}\right)_x\right)_x\\
  &=\partial_x^4U-\left(\frac{1}{(1+U_x^2)^{1/2}}\left(\frac{U_{xx}}{(1+U_x^2)^{3/2}}\right)_x\right)_x,\quad x\in\mathbb{R},\ t>0,
\end{align*}
and thus $U$ is a solution to \eqref{EGSDI}.
 Since
\begin{align*}
  &[e^{-t\partial_x^4}f](\sigma^{1/4}x)
  = \left[e^{-\sigma^{-1}t\partial_x^4}
  f(\sigma^{1/4}\cdot)\right](x), \\ 
  &[\partial_x e^{-t\partial_x^4}f](\sigma^{1/4}x)
  = \sigma^{-1/4} \left[\partial_x e^{-\sigma^{-1}t\partial_x^4} f (\sigma^{1/4}\cdot) \right] (x)
\end{align*}
for any function $f$, we see that
\begin{align*}
  &\sigma^{-1/4}U(\sigma^{1/4}x,\sigma t)\\
  &=\sigma^{-1/4}[e^{-\sigma t\partial_x^4}\phi_{A,B}](\sigma^{1/4}x)+\sigma^{-1/4}\int_0^{\sigma t}\left[\partial_xe^{-(\sigma t-s)\partial_x^4} \left(\alpha[v]v_{xx}+F[v]\right)(s)\right](\sigma^{1/4}x)\,ds\\
  &=\sigma^{-1/4}e^{-t\partial_x^4}\phi_{A,B}(\sigma^{1/4}\cdot) +\sigma^{-1/2} \int_0^{\sigma t} \left[\partial_x e^{-(t-\sigma^{-1}s)\partial_x^4} \left(\alpha[v]v_{xx} + F[v] \right)
  (\sigma^{1/4}\cdot,s) \right](x)\,ds\\
  &= I + I\!I.
\end{align*}
Since $\phi_{A,B}(\sigma^{1/4}x)=\sigma^{1/4}\phi_{A,B}(x)$, we see that
\[
	I = \sigma^{-1/4} e^{-t\partial_x^4} \sigma^{1/4} \phi_{A,B} 
	= e^{-t\partial_x^4} \phi_{A,B}.
\]
Since
\begin{align*}
	& \partial_x \left(f (\sigma^{1/4}x) \right)
	= \sigma^{1/4} f_x (\sigma^{1/4}x), \\
	& \partial_x^2 \left(f (\sigma^{1/4}x) \right)
	= \sigma^{1/2} f_{xx} (\sigma^{1/4}x),
\end{align*}
we see that
\begin{align*}
	& \alpha \left[ f (\sigma^{1/4} \cdot) \right](x)
	= \alpha [f] (\sigma^{1/4} x) \\
	& F \left[f (\sigma^{1/4} \cdot) \right](x)
	= \sigma^{1/2} F[f] (\sigma^{1/4}x).
\end{align*}
We thus observe that
\begin{align*}
  I\!I &= \sigma^{-1/2} \int_0^{\sigma t} \partial_x e^{-(t-\sigma^{-1}s)\partial_x^4} \sigma^{-1/2}
  \left( \alpha \left[v(\sigma^{1/4}\cdot,s)\right] v_{xx} (\sigma^{1/4}\cdot,s)
  + F \left[ v(\sigma^{1/4}\cdot,s)\right] \right) ds \\
  & = \sigma^{-1} \int_0^t \partial_x e^{-(t-\tau)\partial_x^4}
  \left( \alpha[v^\sigma](v^\sigma)_{xx} + F [v^\sigma] \right) \sigma\, d\tau;
\end{align*}
here we changed the variable of integration by $s=\sigma\tau$.
 Since $v^\sigma=v$ by \eqref{ESSV}, we conclude that
\begin{align*}
  \sigma^{-1/4}U(\sigma^{1/4}x, \sigma t) &= I + I\!I \\
  &= e^{-t\partial_x^4} \phi_{A,B} + \int_0^t \partial_x e^{-(t-\tau)\partial_x^4}
  \left(\alpha[v]v_{xx} + F[v]\right)d\tau \\
  &=U(x,t).
\end{align*}
Hence $U$ is a self-similar solution to \eqref{EGSDI}.

In summary, we obtain
\begin{thm} \label{TS}
Let $v$ be a self-similar solution to \eqref{EGDSD} in the sense of \eqref{ESSV} satisfying \eqref{EDec} with $v(x,0)=-B$ for $x<0$ and $v(x,0)=A$ for $x>0$, where $A,B\in\mathbb{R}$.
 Then $U$ given by \eqref{EDU} is a self-similar solution to \eqref{EGSDI} (which is uniformly continuous up to $t=0$) with initial data $U(x,0)=\phi_{A,B}(x)$.
 In particular, there exists a forward self-similar solution, which may not be linear provided that $A$ and $B$ are small. 
\end{thm}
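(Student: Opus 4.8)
The plan is to assemble the statement from results already in hand: Theorem~\ref{TMKL}, together with the self-similarity computation carried out in the paragraphs just above this theorem, handles the first assertion; Theorem~\ref{TKL} supplies a $v$ meeting the hypotheses; and a short continuity argument gives non-linearity.

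For the first part I would proceed as follows. Given the $v$ of the hypothesis, set $u_0:=\phi_{A,B}$: since $v(\cdot,0)=\phi_{A,B}'$ almost everywhere and $\phi_{A,B}(0)=0$, this is exactly the Lipschitz primitive of $v_0$ normalised by its value at the origin, and, as in Theorem~\ref{TMKL}, $v$ being a self-similar solution of \eqref{EGDSD} means it satisfies the mild identity \eqref{EEV}, while the bounds \eqref{EDec} are assumed. Theorem~\ref{TMKL} then gives directly that $U$ defined by \eqref{EDU} is uniformly continuous up to $t=0$, solves \eqref{EGSDI}, and satisfies $U(x,0)=u_0(x)=\phi_{A,B}(x)$; the self-similarity $\sigma^{-1/4}U(\sigma^{1/4}x,\sigma t)=U(x,t)$ for all $\sigma>0$ is precisely the identity derived above, which uses the exact homogeneity $\phi_{A,B}(\sigma^{1/4}y)=\sigma^{1/4}\phi_{A,B}(y)$, the scaling laws for $e^{-t\partial_x^4}$, $\alpha[\cdot]$ and $F[\cdot]$, and \eqref{ESSV}. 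I would stress that no time-dependent additive constant can enter, since \eqref{EDU} carries no free constant once $u_0$ is fixed and, by Theorem~\ref{TMKL}, $U+h$ solves \eqref{ESDInt} only for $h\equiv0$.

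For the ``in particular'' clause I would produce a suitable $v$ from \cite{KL}. Fix $A,B\in\mathbb{R}$ small. The function $\phi_{A,B}$ is homogeneous of degree one, of the form $\Omega(y/|y|)|y|$ with $\Omega$ on $S^0=\{\pm1\}$ given by $\Omega(1)=A$ and $\Omega(-1)=B$; it is globally Lipschitz; and $\|\nabla\phi_{A,B}\|_{L^\infty}=\max(|A|,|B|)<\varepsilon$ for $A,B$ small, by Remark~\ref{RSmall}. Theorem~\ref{TKL} then furnishes a globally analytic self-similar solution $u$ of the surface diffusion flow with initial data $\phi_{A,B}$ and with the stated $X_\infty$ and analyticity estimates. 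Setting $v:=u_x$, I would check that $v$ solves \eqref{EGDSD} and its mild form \eqref{EEV} (differentiate \eqref{ESDInt} in $x$, using $\partial_xe^{-t\partial_x^4}=e^{-t\partial_x^4}\partial_x$ and $u_{xxx}=v_{xx}$), that it is self-similar in the sense of \eqref{ESSV} (differentiate in $x$ the scaling relation for $u$), that $v(\cdot,0)=\phi_{A,B}'$ equals $A$ on $\{x>0\}$ and $-B$ on $\{x<0\}$, and that it satisfies \eqref{EDec}. For this last point, self-similarity of $u$ forces $v(x,t)=\psi'(xt^{-1/4})$, $v_x(x,t)=t^{-1/4}\psi''(xt^{-1/4})$ and $v_{xx}(x,t)=t^{-1/2}\psi'''(xt^{-1/4})$ with $\psi:=u(\cdot,1)$, so \eqref{EDec} reduces to the boundedness of $\psi',\psi'',\psi'''$ on $\mathbb{R}$, which I would read off from $\|u\|_{X_\infty}<\infty$ and the analyticity bounds of Theorem~\ref{TKL}. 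The hypotheses of the first part then hold for this $v$, producing a forward self-similar solution $U$ of \eqref{EGSDI} with $U(\cdot,0)=\phi_{A,B}$.

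Finally, for non-linearity I would argue by contradiction: were $U(\cdot,t)$ affine in $x$ for every $t>0$, then letting $t\downarrow0$ and using that $U$ is uniformly continuous up to $t=0$ would force $\phi_{A,B}=U(\cdot,0)$ to be affine, which happens only when $A=-B$; hence for $A,B$ small with $A\neq-B$ the solution $U$ is not linear. The conceptual point---that $U$ is genuinely self-similar, with no ambiguous time-dependent additive term---is already settled in the discussion preceding the theorem, so the step I expect to require the most care is the verification of \eqref{EDec}, and of the exact identity \eqref{EEV}, for $v=u_x$: one must turn the Koch--Lamm estimates, which are phrased in terms of $|\nabla u|$ and its parabolic derivatives, into clean pointwise bounds on $u_{xx}$ and $u_{xxx}$. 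The only subtlety there, the absolute value, is harmless because $u_x(\cdot,1)$ is real-analytic and hence either vanishes identically or has only isolated zeros.
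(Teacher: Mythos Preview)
Your proposal is correct and follows essentially the same route as the paper: the theorem is presented there as a summary (``In summary, we obtain'') of Theorem~\ref{TMKL} together with the explicit scaling computation just before it, and the ``in particular'' clause rests on the Koch--Lamm existence result (Theorem~\ref{TKL}) to supply the required $v$. You spell out a few points the paper leaves implicit---the verification of \eqref{EDec} for $v=u_x$ via the profile function, and the contradiction argument for non-linearity when $A\neq-B$---but these are natural details rather than a different strategy.
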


%%%%%%%%%%%%%%

\section*{Acknowledgements}
The authors are grateful for Professor Piotr Rybka for valuable discussion and comments on this topic.
 The work of the first author was partly supported by Japan Society for the Promotion of Science (JSPS) through grants KAKENHI Grant Numbers 24K00531, 24H00183 
and by Arithmer Inc., Daikin Industries, Ltd.\ and Ebara Corporation through collaborative grants. 
 The work of the second author was supported by Grant-in-Aid for JSPS Fellows DC1, Grant Number 23KJ0645.

%%%%%%%%%%%%%%%%%%%%%%%%%%%%%%%%%%%%%%%%%%%


\begin{thebibliography}{XXX}

\bibitem[DY]{DY}
H.\ Du and N.\ K.\ Yip, 
Stability of self-similar solutions to geometric flows. 
\emph{Interfaces Free Bound.}\ {\bf25} (2023), 155--191.

\bibitem[GGS]{GGS}
M.-H.\ Giga, Y.\ Giga and J.\ Saal, 
Nonlinear Partial Differential Equations. Asymptotic Behavior of Solutions and Self-Similar Solutions. 
Progress in Nonlinear Differential Equations and their Applications, 79. 
\emph{Birkh\"auser, Boston-Basel-Berlin,} 2010. x\hspace{-0.1em}v\hspace{-0.1em}i\hspace{-0.1em}i\hspace{-0.1em}i+294 pp.

\bibitem[G]{G}
Y.\ Giga, 
Surface evolution equations. A level set approach. 
Monogr.\ Math., 99. 
\emph{Birkh\"auser Verlag, Basel,} 2006. x\hspace{-0.1em}i\hspace{-0.1em}i+264 pp.

\bibitem[GGK]{GGK}
Y. Giga, M. G{\"o}{\ss}wein and S. Katayama,  Large time behavior of exponential surface diffusion flows on $\mathbb{R}$. 
arXiv: 2411.17175 (2024).

\bibitem[GM]{GM}
Y.\ Giga and T.\ Miyakawa, 
Navier-Stokes flow in $R^3$ with measures as initial vorticity and Morrey spaces. 
\emph{Comm.\ in Partial Differential Equations} {\bf14} (1989), 577--618.

\bibitem[KL]{KL}
H.\ Koch and T.\ Lamm, 
Geometric flows with rough initial data. 
\emph{Asian J.\ Math.}\ {\bf16} (2012), 209--235.

\bibitem[RW]{RW}
P.\ Rybka and G.\ Wheeler, 
A classification of solitons for the surface diffusion flow of entire graphs. 
\emph{Physica D} {\bf477} July 2025, 134702; % ここだけ月を書く
 see also https://arxiv.org/abs/2407.13250 % ここだけURLを書く

\end{thebibliography}
\end{document}